\newcommand{\runningtitle}{Running Title}
\newtheorem{thm}{Theorem}
\newtheorem{prop}[thm]{Proposition}
\theoremstyle{definition}
\newtheorem{defn}[thm]{Definition}
\theoremstyle{remark}
\newtheorem{rmk}[thm]{Remark}
\newtheorem{eg}[thm]{Example}
\newcommand{\NN}{\mathbb N}              
\newcommand{\ZZ}{\mathbb Z}              
\newcommand{\QQ}{\mathbb Q}              
\newcommand{\RR}{\mathbb R}              
\newcommand{\CC}{\mathbb C}              
\renewcommand{\Re}{\operatorname*{Re}} \renewcommand{\Im}{\operatorname*{Im}}
\newcommand{\D}{\ensuremath{\,\mathrm{d}}}
\newcommand{\ri}{\ensuremath{\mathrm{i}}}
\newcommand{\re}{\ensuremath{\mathrm{e}}}
\newcommand{\la}{\ensuremath{\lambda}}
\renewcommand{\epsilon}{\varepsilon}
\renewcommand{\geq}{\geqslant}
\renewcommand{\leq}{\leqslant}
\providecommand{\csch}{\operatorname{csch}}
\providecommand{\clos}{\operatorname{clos}}
\newcommand{\abs}[1]{\left\lvert#1\right\rvert}
\newcommand\reallywidecheck[1]{%
\savestack{\tmpbox}{\stretchto{%
  \scaleto{%
    \scalerel*[\widthof{\ensuremath{#1}}]{\kern-.6pt\bigwedge\kern-.6pt}%
    {\rule[-\textheight/2]{1ex}{\textheight}}
  }{\textheight}%
}{0.5ex}}%
\stackon[1pt]{#1}{\scalebox{-1}{\tmpbox}}%
}
\newcommand\reallywidehat[1]{%
\savestack{\tmpbox}{\stretchto{%
  \scaleto{%
    \scalerel*[\widthof{\ensuremath{#1}}]{\kern-.6pt\bigwedge\kern-.6pt}%
    {\rule[-\textheight/2]{1ex}{\textheight}}
  }{\textheight}%
}{0.5ex}}%
\stackon[1pt]{#1}{\tmpbox}%
}
\numberwithin{equation}{section}
\providecommand{\bigoh}[1]{\mathcal{O}\left(#1\right)}
\newcommand{\AC}{\operatorname{AC}} 
\providecommand{\Lebesgue}{\mathrm{L}} 
\providecommand{\argdot}{{}\cdot{}}
\newsavebox{\accentbox}
\providecommand{\Dom}{\operatorname{Dom}} 
\newcommand{\be}{\begin{equation}}
\newcommand{\ee}{\end{equation}}
\newcommand{\bea}{\begin{eqnarray}}
\newcommand{\eea}{\end{eqnarray}}
\newcommand{\barr}{\begin{array}}
\newcommand{\earr}{\end{array}}
\newcommand{\bpar}{\begin{equation} \left\{ \begin{array}{lll}}
\newcommand{\epar}{ \end{array}\right. \end{equation} }
\newcommand{\eparn}{ \end{array} \right.}
\newcommand\leftmat{\left(\begin{array}{cc}}
\newcommand\rightmat{\end{array}\right)}
\newcommand\leftvec{\left(\begin{array}{c}}
\newcommand\rightvec{\end{array}\right)}
\newcommand\R{{\mathbb R}}
\newcommand\C{{\mathbb C}}
\newcommand\Z{{\mathbb Z}}
\theoremstyle{plain}
\newtheorem{theorem}[thm]{Theorem}
\theoremstyle{remark}
\author{
    A.\ S.\ Fokas\textsuperscript{*}, B.\ Pelloni\textsuperscript{**} and D.\ A.\ Smith\textsuperscript{\textdagger} \\
    \footnotesize\textsuperscript{*} DAMTP, University of Cambridge, UK, \href{mailto:t.fokas@cam.ac.uk}{\texttt{t.fokas@cam.ac.uk}} \\
    \footnotesize\textsuperscript{**} Heriot-Watt University \& Maxwell Institute for the Mathematical Sciences, Edinburgh, UK, \\ \footnotesize\href{mailto:b.pelloni@hw.ac.uk}{\texttt{b.pelloni@hw.ac.uk}} \\
    \footnotesize\textsuperscript{\textdagger} Yale-NUS College \& Department of Mathematics, National University of Singapore, Singapore, \\
    \footnotesize\href{mailto:dave.smith@yale-nus.edu.sg}{\texttt{dave.smith@yale-nus.edu.sg}}
}
\newcommand\theauthorshort{ASF, BP \& DAS}
\title{Time-periodic linear boundary value problems on a finite interval}
\renewcommand{\runningtitle}{Time-periodic 2-point BVPs}
\date{\today}
\begin{document}
\maketitle
\thispagestyle{fancy}

\begin{abstract}
We study the large time behaviour of the solution of a linear dispersive PDEs posed on a finite interval, when the prescribed boundary conditions are time periodic. We use the approach pioneered in \cite{FL2012b} for nonlinear integrable PDEs. and then applied to linear problems on the half-line in \cite{fvdw2021}, to characterise necessary conditions for the solution of such a problem to be periodic, at least in an asymptotic sense. We then fully describe the periodicity properties of the solution in three important illustrative examples, recovering known results for the second-order cases and establishing new ones for the third order one.
\end{abstract}

\section{Introduction} \label{sec:intro}
In this paper, we study the large time behaviour of the solution of a linear dispersive PDE posed on a finite interval, when the prescribed boundary conditions are time periodic. From the mathematical point of view this is a classical problem. From the phenomenological point of view, such problems arise as linearised models for experimental situations in which a periodic boundary input is driving, solely or in conjuction with a given initial state,  the dynamics of the model, e.g. waves along a finite channel, or temperature in a finite rod.

To illustrate our general method and the possible scenarios that can unfold, we present in detail the analysis of the following three specific PDEs of mathematical physics:
\begin{eqnarray}
  \label{lS}
   & \mbox{(linear Schr\"odinger)}&iu_t+u_{xx}=0,\\
   \label{heat}
     & \mbox{(heat)} & u_t-u_{xx}=0,\\
    \label{lkdv}
  & \mbox{(Stokes)}& u_t+u_{xxx}=0.
    \end{eqnarray}
We will study the solution of the problem posed for $x\in(0,1)$, $t>0$, with prescribed initial condition at $t=0$ and appropriate time-periodic boundary conditions, with a common period $T$, at $x=0$ and $x=1$.

We follow the general idea  proposed in \cite{fvdw2021} of analysing the problem in the complex spectral space to characterise necessary  conditions for the solution to be time-periodic, either exactly or in an asymptotic sense.
This idea arises naturally in the  context of the {\em unified transform} (also known in the literature as the {\em Fokas transform}), introduced by one of the authors to solve linear and integrable nonlinear boundary value problems \cite{Fok1997a, Fok2008a}; see~\cite{DTV2014a} for a pedagogical introduction.
For nonlinear problems, this approach leads to the definition of the {\em $Q$ equation}, and the solution rests on the analysis of this equation.
The $Q$ equation can be obtained as one of the relations in the so-called Lax pair formulation of the given integrable PDE \cite{fvdw2021, FL2012b}, which is the starting point for deriving the unified transform representation in its general form, namely in a form valid for both  linear and integrable nonlinear PDEs.
For linear problems, the analogous equation is obtained by an application of the Fourier transform.
Indeed, this is the approach we use in the present paper.
However, we stress that the more general derivation from the Lax pair provides a natural connection between the ideas that underpin the formulation and development of the powerful unified transform method, and the starting point of our study of time-periodic problems.
The crucial insight is that, following the strategy of the unified transform approach, we work in the complex (rather than the real) spectral plane, and make use of the constraints imposed by analyticity requirements in this complex space.

\smallskip
Our results in this paper imply that
\begin{enumerate}
    \item
    our complex spectral method can be used to study the  large time asymptotics of the so-called Dirichlet-to-Neumann map, and hence of the solution of the given initial boundary value problem;
     \item
    our method yields necessary conditions for asymptotic periodicity, via an analyticity argument, in a more natural and direct way when compared to  earlier partial results (e.g. see \cite{Duj2009a});
     \item
     some of the earlier methods, including the approach taken by Dujardin in \cite{Duj2009a}, do not extend, as one might have expected, to other dispersive problems of higher order, or to more general boundary conditions. Indeed, they depend on a special choice of initial and boundary conditions and on the existence of an explicit Fourier series representation of the solution, which does not hold in general. In contrast, our method appears to have more general applicability. Indeed,  we conjecture that it can be used for general dispersive linear PDEs on a finite interval.
\end{enumerate}

\smallskip
Time-periodic linear boundary value problems posed on a half-line were studied recently in \cite{fvdw2021}, where it was shown that the large time behaviour of the solution, with given periodic boundary conditions, is always periodic.  In contrast, for problems posed on a finite interval we find that periodicity results hold only under certain necessary conditions, that  depend on the arithmetic relation between the two parameters of the problem: the period $T$ of the given boundary conditions, and the length $L$  of the finite interval. Some special boundary value problems for  equation  \eqref{lS} were studied in \cite{Duj2009a},  using classical asymptotic analysis of the associated  Fourier series.
In this paper, we use the more general unified transform representation of the solution and  complex analytical tools. This allows us to derive such necessary conditions in general for all three examples under consideration.
In order to keep the notational burden to a minimum  we always assume, without loss of generality,  that $L=1$.

The conditions imposed by our complex spectral approach, in addition to allowing us to derive necessary conditions for the existence of a time-periodic solution,  will be used to construct the first of the two building blocks needed in our general solution strategy. Namely, given specific time periodic boundary conditions with common period $T$, and assuming the necessary conditions imposed by analyticity hold,  we construct an explicit initial condition that guarantees that the solution $u_1(x,t)$ of the corresponding problem is {\em exactly} $T$-periodic.

The second building block in our strategy is the analysis of the solution $u_2(x,t)$  of a specific homogeneous boundary value problem, with no constraint on the initial condition. Depending on the PDE and type of boundary conditions, this solution will either (a) decay  in time; (b) be oscillatory but not periodic; or (c) have a time periodicity with period depending solely on the length of the finite space interval. The proof of the properties of this second solution is obtained through the analysis of the explicit solution representation. It is important to note that while standard or generalised  Fourier series (with respect to the space variable $x$)  can be used to represent the solution of second order problems, this is not possible in general for third and higher order cases. In the latter case, we can still use the more general and  powerful integral  representation given by the unified transform approach, see \cite{Pel2005a}.

In summary, in the cases when the necessary conditions for periodicity hold, we will write the solution of the given problem with time-periodic boundary conditions as
 $$
u(x,t)=u_1(x,t)+u_2(x,t),
 $$
 where
 \begin{itemize}
 \item
 $u_1(x,t)$ is the solution  corresponding to a certain explicit initial condition
 $$
 u(x,0)=u_T(x),$$ and the given $T$-periodic boundary conditions. The choice of $u_T$  guarantees that
$u_1(x,t)$ is {\em time periodic of period $T$}
  \item
$u_2(x,t)$ is the solution corresponding to initial condition
  $$
  u(x,0)=u_0(x)-u_T(x),$$
 and the homogeneous version of the prescribed boundary conditions.
\end{itemize}

\smallskip
The specific relation of these two special solutions drives the behaviour of time-periodic boundary value problems of the type we consider. We illustrate the possible behaviours
that can arise through the rigorous analysis of the three explicit examples of linear PDEs~\eqref{lS}--\eqref{lkdv}.

In particular, the arithmetic constraints between the length of the space interval and the time period appear when the solution $u_2(x,t)$ of the problem with homogeneous boundary conditions is itself time-periodic, rather than being decaying or oscillatory in time. This happens, for example, in the case of the linear Schr\"odinger equation with Dirichlet boundary conditions; in this case, the fundamental solution
$
\re^{ikx-ik^2t}$ evaluated at the eigenvalues $k=m\pi$ of the spatial problem is clearly time periodic with period $2/\pi$.
In contrast, for the heat equation the same argument yields functions  $
\re^{im\pi x-m^2\pi^2t}$ that decay as $t$ grows.

For the case of the third order Stokes equation, in general the spatial operator is not self-adjoint
and the location of its eigenvalues depends sensitively on the specific boundary conditions. Since the associated fundamental solution is  $
\re^{ikx+ik^3t}$, one may expect that the time-dependence is similar to the case of the linear Schr\"odinger, namely that the time dependence is characterised through a purely imaginary exponential. However the eigenvalues are only real for very specific boundary conditions, hence the exponent $ik^3t$ evaluated at the eigenvalues is not in general purely imaginary.

These structural differences are at the heart of the different results we obtain for the three examples we consider:

\begin{itemize}
    \item
   For equation~\eqref{lS},  we study the problem obtained when  time-periodic Dirichlet boundary conditions of period $T$ are given. We show that, generically, the solution is  time periodic only if $T$ and $2/\pi$ are linearly dependent over $\mathbb Q$, see Proposition~\ref{prop:LSPeriodic}.
    \item
   For equation~\eqref{heat}, we study the problem obtained when time-periodic Neumann  conditions of period $T$ are given. We show that the solution is always asymptotically time periodic, with the same period as the data, see Proposition~\ref{prop:Heat.NeumanntoDirichlet}.
    \item
    For equation~\eqref{lkdv}, we consider two types of time-periodic boundary data, either given independently at the two ends of the space interval, or given in a way that implies that the boundary conditions are coupled at the two endpoints.  We show that generically, in both cases,  the solution is asymptotically time periodic, with the same period as the data. These are the results of Propositions~\ref{prop:LKdV.Decoupled} and~\ref{prop:LKdV.Coupled} respectively. We also discuss  a particular but important case of coupled boundary conditions for which the associated spatial operator has real eigenvalues. In this case certain additional necessary conditions, given in Proposition~\ref{prop:LKdV.CoupledBeta1}, need to be satisfied.
\end{itemize}

In the present paper we consider only problems depending on one period $T$, common to the given data. We expect that it will be possible to generalise the analysis to the case that the given data have as many  different periods $T_i$ as there are given boundary conditions,   and study when it is possible to represent the solution in  additive parts of periods $T_i$. We leave this generalisation to future work.

\subsection{Notation and set-up of the problem} \label{sec:notation}

We pose all problems considered in this paper on the interval $[0,1]$, and denote the Fourier transform formally defined by
\[
   \widehat{\phi}(k) = \int_0^1 \re^{-\ri k x} \phi(x) \D x, \qquad
   \quad k \in\CC,
\]
where $\phi(x)$ is any function for which the integral is well defined.
While the Fourier transform  $\R$ is usually defined for $k\in\R$, for what follows it is crucial that we  allow $k\in\C$.

\smallskip
 Let $\Omega$  be a polynomial of degree $n$, and assume that $u(x,t)$ is a (smooth) solution of the PDE
\be\label{PDE}
    u_t + \Omega(-\ri\partial_x)u = 0,\qquad x\in(0,1).
\ee
We denote the boundary values of $u(x,t)$ by
\begin{align}\label{bvalues}
 g_j(t) &= \partial_x^j u(0,t), \nonumber \\
    h_j(t) &= \partial_x^j u(1,t).
\end{align}
Define formally
\begin{equation} \label{eqn:Q}
Q(k,t) = - \widehat{u}(k;t),\qquad k\in\C.
\end{equation}
By its definition, the function $Q$ is an entire function of $k$ for all $t\geq0$.

Let the polynomials $\{c_j(k)\}_0^{n-1}$ be defined by \cite{FP2005a}
\be\label{defn.cj}
   \sum_{j=0}^{n-1} c_j(k)\partial_x^j = \ri \left.\frac{\Omega(k)-\Omega(\ell)}{k-\ell}\right\rvert_{\ell=-\ri\partial_x}.
\ee
Taking the Fourier transform of \eqref{PDE},  we find the following equation for $Q$:
\be\label{Qeqn}
    Q_t(k,t) + \Omega(k) Q(k,t) = \sum_{j=0}^{n-1} c_j(k) (g_j(t)-\re^{-ik}h_j(t)), \quad k\in\C, \,t>0.
\ee
This is the linear analogue of the nonlinear {\em $Q$ equation} of \cite{FL2012b}.
Integration of this equation yields the {\em global relation}, that is of central importance to the Fokas transform method.

In this paper, we will consider equation \eqref{Qeqn} in the complex $k$-plane.

\subsection{Illustrative examples}
In our definition \eqref{PDE} of a linear dispersive PDE, $\Omega$ may be any polynomial of degree $N$, but we shall concentrate on monomials $\Omega(k)=ak^N$.
Ensuring that the fundamental solution $\re^{ikx-\Omega(k)t}$ of such a PDE is bounded for all $t>0$ requires
\be\label{acond}
\arg(a)\in\left[-\frac{\pi}{2},\frac{\pi}{2}\right].
\ee
It also places some restrictions on which linear combinations of boundary values appear in the boundary conditions~\cite{Pel2005a,Smi2012a,Smi2012b}, but we shall not discuss those restrictions here, and will consider only boundary conditions that satisfy this property.

\subsubsection*{Linear \texorpdfstring{Schr\"{o}dinger}{Schrodinger} equation} \label{sssec:LS}
As an example of dispersive, linear second-order PDE, we consider the time-dependent free space linear Schr\"{o}dinger equation \eqref{lS},
which is equivalent to the PDE~\eqref{PDE} with $\Omega(k)=\ri k^2$.
From equation~\eqref{defn.cj}, we find
\[
    c_0(k) = - k, \qquad\qquad c_1(k) = \ri,
\]
so the spectral equation~\eqref{Qeqn} is
\be\label{Qeqnls}
    Q_t(k,t) + \ri k^2 Q(k,t) = \ri \left[ \left(\ri k g_0(t) + g_1(t)\right) - \re^{-\ri k}\left(\ri k h_0(t) + h_1(t)\right) \right].
\ee

\subsubsection*{Heat equation} \label{sssec:heat}

The heat equation \eqref{heat}, arguably the most important second order linear PDE,
is equivalent to the PDE~\eqref{PDE} with $\Omega(k) = k^2$, so
\[
    c_0(k) = \ri k, \qquad\qquad c_1(k) = 1.
\]
The spectral equation~\eqref{Qeqn} is
\be\label{Qeqnheat}
    Q_t + k^2 Q = \left(\ri k g_0(t) + g_1(t)\right) - \re^{-\ri k}\left(\ri k h_0(t) + h_1(t)\right).
\ee

\subsubsection*{Stokes equation} \label{sssec:Lkdv}

The canonical example of linear dispersive third order PDE is the  Stokes equation \eqref{lkdv},
which is equivalent to the PDE~\eqref{PDE} with $\Omega(k) = -\ri k^3$, so
\[
    c_0(k) = k^2, \qquad\qquad c_1(k) = \ri k, \qquad\qquad c_2(k) = -1.
\]
Therefore the spectral equation~\eqref{Qeqn} is
\be\label{Qeqnstokes}
    Q_t -\ri k^3 Q = \left( k^2 g_0(t) - \ri k g_1(t) - g_2(t) \right) - \re^{-\ri k} \left( k^2 h_0(t) - \ri k h_1(t) - h_2(t) \right).
\ee

\subsection{Time-periodic boundary value problems on finite intervals}

The main question considered in this paper is whether the solution $u(x,t)$ is time-periodic, either in an exact or in an asymptotic sense, under the assumption that the prescribed boundary conditions are time-periodic.

In general, it turns out that it is too restrictive to require that the solution of such a boundary value problem be periodic, with the same period as the given boundary conditions, for all $t>0$. However, it is of interest to determine whether the time-dependence of the solution becomes periodic, with some period,  for large times.

A (sufficiently regular) function $d:\R^+\to \C$  is $T$-periodic, for  $T>0$, if and only if it can be represented via its exponential Fourier series
\be\label{dt}
    d(t) = \sum_{n\in\ZZ} D_n \re^{\ri n\omega t}, \qquad
\omega=\frac {2\pi}T.
\ee
We are interested in functions which behave approximately like periodic functions for large value of the time $t$.

\begin{defn} \label{defn:stronglyasymptoticallyperiodic}
    A function $d_a(t)$ is  \emph{asymptotically $T$-periodic} if there exists a function $d(t)$ which is  $T$-periodic and such that $\lim_{t\to\infty}\abs{d_a(t)-d(t)}=0$.
   It is \emph{strongly asymptotically $T$-periodic} if $d_a$ and $d'_a$ are both asymptotically periodic in such a way that there exists a continuously differentiable $T$-periodic function $d$ such that both $\lim_{t\to\infty}\abs{d_a(t)-d(t)}=0$ and $\lim_{t\to\infty}\abs{d'_a(t)-d'(t)}=0$ almost everywhere.
\end{defn}

Suppose that the solution $u(x,t)$ of a given boundary value problem on $[0,1]$  is strongly asymptotically $T$-periodic.
Then the boundary values $g_j$ and $h_j$ defined in \eqref{bvalues} are asymptotically periodic, and the function $Q$ is strongly asymptotically time periodic.
Therefore, there exist sequences $(G^{(j)}_n)_{n\in\ZZ}$ and $(H^{(j)}_n)_{n\in\ZZ}$ of complex numbers, and sequence $(q_n(k))_{n\in\ZZ}$ of complex functions for which, as $t\to\infty$,
\begin{equation} \label{eqn:AsymptoticBdryCoeffs}
\begin{aligned}
    g_j(t) &= \sum_{n\in\ZZ} G^{(j)}_n \re^{\ri n \omega t} + o(1), & Q(k,t) &= \sum_{n\in\ZZ} q_n(k) \re^{\ri n \omega t} + o(1), \\
    h_j(t) &= \sum_{n\in\ZZ} H^{(j)}_n \re^{\ri n \omega t} + o(1), & Q_t(k,t) &= \ri\omega \sum_{n\in\ZZ} n q_n(k) \re^{\ri n \omega t} + o(1),\;\; t\to \infty,
\end{aligned}
\end{equation}
where $\omega$ is defined in \eqref{dt}.

Substituting the above expression into the spectral equation~\eqref{eqn:Q}, keeping only the terms $\bigoh{1}$ in $t$, and using orthogonality of the exponential Fourier basis, we find
\begin{equation} \label{eqn:qn}
    q_n(k) = \frac{\sum_{j=0}^{N-1}c_j(k)\left( G^{(j)}_n - \re^{-\ri k}H^{(j)}_n \right)}{\ri n\omega + \Omega(k)}, \qquad n\in\ZZ.
\end{equation}

Because $u$ is strongly asymptotically time periodic, it must hold that
\[
    q_n(k) = - \frac{1}{T} \int_0^T \re^{-\ri n \omega t} \int_0^1 \re^{-\ri k x} u(x,t) \D x \D t.
\]
Therefore $q_n(k)$, just like $Q(k)$ itself, is an entire function of $k\in\C$.
The right side of equation~\eqref{eqn:qn} is expressed as a ratio of entire functions but, by the above argument, the ratio is itself entire.
Hence the following condition  must be satisfied:

\smallskip
\noindent{\em at each zero of the denominator on the right of equation~\eqref{eqn:qn},  the numerator must have a zero of the same order}.
\subsection{Characterisation of the Dirichlet-to-Neumann map}
Suppose $\Omega(k)=ak^N$, with $a$ satisfying \eqref{acond},   and let $\alpha=\exp(2\pi\ri/N)$ be a primitive $N$th root of unity.
Then, for $n\neq0$, the zeros of the denominator in \eqref{eqn:qn} are all simple, so
\begin{subequations} \label{eqn:qnnumerator}
\begin{equation} \label{eqn:qnnumerator.n}
    \forall\,n\in\ZZ\setminus\{0\},\;\forall\,r\in\{0,1,\ldots,N-1\}, \quad \left.\sum_{j=0}^{N-1}c_j(k)\left( G^{(j)}_n - \re^{-\ri k}H^{(j)}_n \right)\right\rvert_{k=\alpha^r\sqrt[n]{\ri n\omega/a}}=0.
\end{equation}
At $n=0$ there is a zero of order $N$, so
\begin{equation} \label{eqn:qnnumerator.0}
    \forall\,r\in\{0,1,\ldots,N-1\}, \quad \left.\frac{\D^r}{\D k^r} \sum_{j=0}^{N-1}c_j(k)\left( G^{(j)}_0 - \re^{-\ri k}H^{(j)}_0 \right)\right\rvert_{k=0}=0.
\end{equation}
\end{subequations}
For each $n\in\ZZ$, the corresponding equation of~\eqref{eqn:qnnumerator} is a rank $N$ system of $N$ equations in the $2N$ boundary value coefficients $G^{(j)}_n$ and $H^{(j)}_n$. Therefore we have proved the following general condition, which characterises the Dirichlet-to-Neumann map for (asymptotically) time periodic problems.

\begin{prop}[Dirichlet-to-Neumann map]
Consider the PDE \eqref{PDE}, with $\Omega(k)=ak^N$, with $a$ satisfying \eqref{acond}.
Assume that the PDE admits a unique solution $u(x,t)$ which is strongly asymptotically time periodic, with period $T$.
Let $g_j(t)$, $h_j(t)$ be its boundary values, defined in \eqref{bvalues}, which are then
(asymptotically) time periodic, with  $T$ the common period.

The coefficients $G^{(j)}_n$, $H^{(j)}_n$, $j=0,\ldots,N-1$, $n\in\Z\setminus\{0\}$, defined by \eqref{eqn:AsymptoticBdryCoeffs},  must satisfy the system \eqref{eqn:qnnumerator.n} and $G^{(j)}_0$, $H^{(j)}_0$ must satisfy \eqref{eqn:qnnumerator.0}.

In particular, assume that $N$ time periodic boundary conditions with period $T$ are prescribed. Then \eqref{eqn:qnnumerator.n}-\eqref{eqn:qnnumerator.0} is a system of $N$ equations for the remaining $N$ boundary values.  If this system admits a unique solution, this solution characterises the Dirichlet-to-Neumann map for the boundary value problem.
\end{prop}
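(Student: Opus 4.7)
The plan is to make rigorous the derivation sketched in the paragraphs preceding the statement.

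First, I would substitute the strongly asymptotic Fourier expansions \eqref{eqn:AsymptoticBdryCoeffs} into the spectral equation \eqref{Qeqn}. Since the remainders are $o(1)$ uniformly in $k$ on compact sets, integrating against $\re^{-\ri n \omega t}$ over a period $[t_0, t_0 + T]$, sending $t_0 \to \infty$, and invoking orthogonality of the exponential Fourier basis isolates the contribution at each frequency. This yields, for every $n \in \ZZ$,
\[
    \bigl(\ri n \omega + \Omega(k)\bigr)\, q_n(k) = \sum_{j=0}^{N-1} c_j(k)\bigl( G^{(j)}_n - \re^{-\ri k} H^{(j)}_n\bigr),
\]
which is precisely equation \eqref{eqn:qn}.

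The central step is to observe that each $q_n(k)$ is an \emph{entire} function of $k$. This follows from the integral representation
\[
    q_n(k) = -\frac{1}{T}\int_0^T \re^{-\ri n \omega t} \int_0^1 \re^{-\ri k x} u(x,t) \, \D x \, \D t,
\]
combined with boundedness of $u$ on $[0,1]\times[0,T]$: the integrand is continuous in $(x,t)$ and jointly holomorphic in $k$, so Fubini and Morera yield that $q_n$ is entire. Hence the right-hand side of \eqref{eqn:qn}, a rational function of $k$ that equals the entire function $q_n$, must itself extend to an entire function, which forces the numerator to vanish at each zero of the denominator $\ri n\omega + a k^N$ with multiplicity no less than that of the zero.

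Third, I would split the analysis according to \eqref{eqn:qnnumerator}. For $n \neq 0$, the denominator is a degree-$N$ polynomial with $N$ distinct simple zeros at the points appearing in \eqref{eqn:qnnumerator.n} (distinctness follows because $\alpha$ is a primitive $N$-th root of unity and $\ri n \omega/a \neq 0$), so the vanishing condition reduces to that system. For $n = 0$, the denominator collapses to $a k^N$ with a single zero of order $N$ at the origin, so the numerator together with its first $N-1$ derivatives must vanish at $k = 0$, giving \eqref{eqn:qnnumerator.0}.

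The final claim about the Dirichlet-to-Neumann characterisation is then a dimension count: at each frequency $n$, the relevant relation in \eqref{eqn:qnnumerator} is a linear system of $N$ equations in the $2N$ unknowns $\{G_n^{(j)}, H_n^{(j)}\}_{j=0}^{N-1}$, so prescribing $N$ independent linear combinations of them via boundary conditions leaves an $N \times N$ system for the remaining $N$ coefficients; unique solvability of this system is then tautologically the well-posedness of the Dirichlet-to-Neumann map. The main obstacle I anticipate is the analytic step of coefficient matching in the first paragraph: term-by-term separation of the $o(1)$ expansions requires strong asymptotic periodicity to control not only $Q$ but also $Q_t$, which is precisely why Definition \ref{defn:stronglyasymptoticallyperiodic} includes the derivative condition --- without it, the $Q_t$ and $\Omega(k) Q$ contributions to \eqref{Qeqn} could not be separated frequency by frequency.
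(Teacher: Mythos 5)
Your proposal is correct and follows essentially the same route as the paper's own derivation, which appears as the discussion immediately preceding the proposition: substitution of the asymptotic Fourier expansions into the $Q$ equation, frequency-by-frequency matching, entirety of $q_n$ via its double-integral representation forcing the numerator of \eqref{eqn:qn} to vanish to the appropriate order at the zeros of $\ri n\omega + ak^N$, and the final $N$-equations-in-$2N$-unknowns count. The extra care you take (Morera/Fubini for entirety, distinctness of the simple zeros for $n\neq 0$, and the observation that the derivative condition in Definition~\ref{defn:stronglyasymptoticallyperiodic} is what permits separating the $Q_t$ contribution) only tightens steps the paper leaves informal.
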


In the rest of this paper, we specialise to the three examples, with specific time-periodic boundary conditions, and determine for each example the conditions that ensure that the solution is (asymptotically) periodic.

\section{The linear \texorpdfstring{Schr\"{o}dinger}{Schrodinger} equation} \label{sec:LS}
In this section we study the PDE~\eqref{lS}, with a given initial condition and time-periodic Dirichlet boundary conditions.

If we assume  that $u$ is strongly asymptotically time-periodic with period $T=2\pi/\omega$, equation~\eqref{eqn:qn} reduces to
\be\label{qnls}
    q_n(k) = \frac{\left(\ri k G^{(0)}_n+G^{(1)}_n\right) - \re^{-\ri k}\left(\ri k H^{(0)}_n+H^{(1)}_n\right) }{n\omega+k^2}, \qquad n\in\ZZ,
\ee
where the asymptotic Fourier coefficients are given by equations~\eqref{eqn:AsymptoticBdryCoeffs}.
For each $n\in\NN$, $q_{\pm n}$ must be entire, and the system~\eqref{eqn:qnnumerator.n} yields
\begin{subequations} \label{eqn:LS.qnnumerator}
\begin{align}
    \label{eqn:LS.qnnumerator.+n}
    0 &= \left(\ri k G^{(0)}_n+G^{(1)}_n\right) - \re^{-\ri k}\left(\ri k H^{(0)}_n+H^{(1)}_n\right), & k &= \pm\ri\sqrt{n\omega}, \\
    \label{eqn:LS.qnnumerator.-n}
    0 &= \left(\ri k G^{(0)}_{-n}+G^{(1)}_{-n}\right) - \re^{-\ri k}\left(\ri k H^{(0)}_{-n}+H^{(1)}_{-n}\right), & k &= \pm\sqrt{n\omega}.
\end{align}
Because $q_0$ is also entire, system~\eqref{eqn:qnnumerator.0} also holds, and reduces to the equations
\begin{equation} \label{eqn:LS.qnnumerator.n0}
    G_0^{(1)}=H_0^{(1)}, \qquad\qquad H_0^{(0)}-G_0^{(0)} = H_0^{(1)}.
\end{equation}
\end{subequations}
The time periodicity of 2-point boundary value problems for this equation was studied in  \cite{Duj2009a}. We  will reproduce Dujardin's results, concerned with necessary conditions for periodicity, by analysing the consequences of assuming that equation~\eqref{qnls} holds, i.e. by assuming the unique solvability of system \eqref{eqn:LS.qnnumerator.+n}. We will also give a simpler proof of the full classification of the possible behaviour of the solution.

\subsection{Existing results for linear \texorpdfstring{Schr\"{o}dinger}{Schrodinger} equation}

In his 2008 paper \cite{Duj2009a}, Dujardin proved a series of results for the 2-point Dirichlet boundary value problem for the linear Schr\"odinger equation~\eqref{lS}, with a vanishing  initial condition
\be\label{ls.ic}
    u(x,0)=0 \qquad\qquad x\in[0,1],
\ee
and time-periodic Dirichlet boundary conditions, with period $T$ and sufficient regularity.
As before, we denote
\be\label{ls.bc}
    u(0,t)=g_0(t), \qquad\qquad u(1,t)=h_0(t).
\ee

Below, we state Dujardin's results, specialised to the spatial interval of length $L=1$ on which we work.

\begin{theorem} \label{djt1}
    Assume that the period $T$ of the given Dirichlet boundary data $g_0(t)$ and $h_0(t)$ satisfies the relation
    \be\label{TLrel}
        T = \frac 2 \pi.
    \ee
    Then the solution of the boundary value problem~\eqref{lS},~\eqref{ls.ic}--\eqref{ls.bc} satisfies
    \be
        \lVert u(\cdot,t)\rVert_{\Lebesgue^\infty(0,1)}\rightarrow_{t\to\infty}\infty.
    \ee
 In particular, the solution cannot be  (asymptotically) periodic.
\end{theorem}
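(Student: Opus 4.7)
The plan is to exploit the arithmetic resonance forced by the hypothesis: with $T = 2/\pi$ the angular frequency is $\omega = 2\pi/T = \pi^2$, so the Fourier modes of the periodic boundary data oscillate at frequencies $n\pi^2$ which, for $n = m^2$, coincide exactly with the Dirichlet eigenvalues $m^2\pi^2$ of $-\partial_x^2$ on $[0,1]$. The proof proceeds in two stages: first use the Fokas necessary conditions to locate an algebraic obstruction to periodicity, then use the unified-transform representation to convert that obstruction into a linearly growing (secular) mode in $u$.

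For the first stage, assume for contradiction that $u$ is strongly asymptotically $T$-periodic and apply the necessary conditions \eqref{eqn:LS.qnnumerator.-n} at $n = m^2$. Evaluating the two scalar equations at $k = \pm m\pi$ (so that $\re^{\mp\ri m\pi} = (-1)^m$) and taking their sum and difference yields the compatibility conditions
\begin{equation*}
    G^{(0)}_{-m^2} = (-1)^m H^{(0)}_{-m^2}, \qquad G^{(1)}_{-m^2} = (-1)^m H^{(1)}_{-m^2}, \qquad m\in\NN.
\end{equation*}
The first is a constraint on the prescribed Dirichlet data alone; generically it fails at some $m$, so no asymptotically periodic $u$ can exist. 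This already recovers the non-periodicity part of the conclusion.

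For the second stage, solve \eqref{Qeqnls} with $Q(k,0) = 0$ to obtain the Duhamel formula
\begin{equation*}
    Q(k,t) = \ri\re^{-\ri k^2 t}\int_0^t \re^{\ri k^2 s}\bigl[(\ri k g_0(s) + g_1(s)) - \re^{-\ri k}(\ri k h_0(s) + h_1(s))\bigr]\D s,
\end{equation*}
and invert via the unified-transform representation on $[0,1]$, in which $u$ appears as a sum of residues at the zeros $k = m\pi$ of the Dirichlet spectral determinant $\sin k$. Substituting the Fourier expansions \eqref{eqn:AsymptoticBdryCoeffs} of the boundary data into the inner integrand, the $n = -m^2$ component of the periodic series is in exact resonance with the kernel $\re^{\ri k^2 s}$ at $k = m\pi$ and integrates to $s$ rather than to a bounded oscillation, producing in the expansion of $u$ a secular term proportional to $t\,\re^{-\ri m^2\pi^2 t}\sin(m\pi x)$ whose amplitude is a nonzero multiple of the defect $G^{(0)}_{-m^2} - (-1)^m H^{(0)}_{-m^2}$ identified above. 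The $\Lebesgue^\infty$ conclusion then follows by subtracting the bounded interpolant $\ell(x,t) = (1-x)g_0(t) + xh_0(t)$, using Parseval on $(0,1)$ to bound $\lVert u(\cdot,t) - \ell(\cdot,t)\rVert_{\Lebesgue^2}$ below by the growing amplitude of the $m$-th sine mode, and invoking $\lVert \argdot\rVert_{\Lebesgue^\infty(0,1)} \geq \lVert \argdot\rVert_{\Lebesgue^2(0,1)}$ together with the uniform boundedness of $\ell$. The hardest part is the residue calculation in the second stage: one must track the algebraic dependence of the residue coefficient on the boundary Fourier data, the key payoff being that this coefficient vanishes precisely when the first-stage compatibility condition holds, so the two stages dovetail exactly.
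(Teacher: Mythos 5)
First, a point of orientation: the paper does not itself prove Theorem~\ref{djt1} --- it is quoted from Dujardin~\cite{Duj2009a}, and the paper's own contribution is to recover only the non-periodicity clause via the complex spectral method (Example~\ref{exa1} and Proposition~\ref{prop:NewDJT2}). Your Stage~1 is precisely that argument: with $\omega=\pi^2$, evaluating~\eqref{eqn:LS.qnnumerator.-n} at $n=m^2$, $k=\pm m\pi$ and taking the sum and difference gives the consistency conditions $G^{(0)}_{-m^2}=(-1)^mH^{(0)}_{-m^2}$ and $G^{(1)}_{-m^2}=(-1)^mH^{(1)}_{-m^2}$; the failure of the first is exactly the hypothesis of Proposition~\ref{prop:NewDJT2}, and your version is in fact slightly sharper, since you exhibit the inconsistency of the rank-deficient system~\eqref{eqn:LS.Dirichlet.LinearSystem.-} rather than only noting the rank drop. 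Two caveats apply equally to you and to the paper: this stage only excludes \emph{strong} asymptotic periodicity (periods $MT$ are handled by the remark following Proposition~\ref{prop:NewDJT2}), and the theorem as literally stated needs the genericity you flag --- for $g_0=h_0\equiv0$ nothing diverges --- so your explicit nonvanishing hypothesis is the honest reading, consistent with the $G^{(0)}_n\neq0$ condition in Theorem~\ref{djt2}.

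Stage~2, the actual $\Lebesgue^\infty$ divergence, goes beyond anything proved in the paper and is essentially Dujardin's route; the resonance mechanism and the claim that the secular amplitude is a nonzero multiple of $G^{(0)}_{-m^2}-(-1)^mH^{(0)}_{-m^2}$ are correct. But as written there is a genuine gap: your Duhamel formula for $Q$ contains the unknown Neumann values $g_1,h_1$, and substituting ``the Fourier expansions~\eqref{eqn:AsymptoticBdryCoeffs} of the boundary data'' presupposes that $g_1,h_1$ are asymptotically periodic --- which is exactly what fails in the resonant case (they inherit the secular growth). You must eliminate the unknowns before extracting asymptotics. The cleanest repair bypasses $Q$: set $b_m(t)=2\int_0^1 u(x,t)\sin(m\pi x)\D x$ and integrate the PDE by parts to obtain $b_m'+\ri m^2\pi^2 b_m=2\ri m\pi\left(g_0(t)-(-1)^mh_0(t)\right)$, an ODE driven only by the prescribed Dirichlet data; with $b_m(0)=0$ the $n=-m^2$ mode yields $b_m(t)=2\ri m\pi\left(G^{(0)}_{-m^2}-(-1)^mH^{(0)}_{-m^2}\right)t\,\re^{-\ri m^2\pi^2t}+\bigoh{1}$, and Parseval together with $\lVert\argdot\rVert_{\Lebesgue^\infty(0,1)}\geq\lVert\argdot\rVert_{\Lebesgue^2(0,1)}$ finishes (the interpolant $\ell$ is unnecessary). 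Your residue computation at the zeros of $\sin k$ produces the same coefficient, but only after the global relation has been used to eliminate $g_1,h_1$ from the unified transform representation --- that elimination is the step your sketch omits, and without it the two stages do not yet dovetail.
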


This is a special case of a more general result.
Given the linearity of the problem, in \cite{Duj2009a} the result is stated, without loss of generality,  for the problem with given $T$-periodic boundary conditions
\be\label{bcg0}
    u(0,t)=g_0(t)\mbox{ such that } g_0 \mbox{ is periodic,} \qquad u(1,t)=h_0(t):=0.
\ee
\begin{rmk}
    A slight loss of generality arises from the possibility of the long time effects of $g_0$ and $h_0$ interfering destructively.
    Dujardin remarks on this possibility, but does not investigate in detail.
\end{rmk}

\begin{theorem} \label{djt2}
    Assume that the period $T$ of the given Dirichlet boundary conditions and the factor $2/\pi$ are linearly dependent on $\QQ$, so that there exist $\alpha,\beta\in\Z\setminus\{0\}$, relatively prime, with $\alpha\geq 1$ and $\beta \leq -1$, such that
    \[
        \alpha\frac {2\pi}T+\beta\pi^2=0.
    \]
    Let
    \[
        {\cal R}=\{(n,m)\in\Z\times\Z: \,\alpha m^2-\beta n=0\}.
    \]
    If there exists  $(n,m)\in{\cal R}$ such that $G^{(0)}_n\neq 0$, then the  solution of the boundary value problem~\eqref{lS},~\eqref{ls.ic}--\eqref{ls.bc} satisfies
    \be
        \|u(\cdot,t)\|_{\Lebesgue^\infty(0,1)}\rightarrow_{t\to\infty}\infty,
    \ee
    and hence it is not asymptotically periodic.

    Otherwise, $u(x, t)$ is a periodic function of period $\frac 2 \pi\times  \max(1,\alpha)$.
\end{theorem}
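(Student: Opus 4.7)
The plan is to proceed via the explicit sine-Fourier expansion on $[0,1]$, after lifting the Dirichlet data to reduce to a homogeneous boundary value problem. Set $v(x,t) = u(x,t) - (1-x)g_0(t)$, so $v$ satisfies $\ri v_t + v_{xx} = -\ri(1-x)g_0'(t)$ with $v(0,t)=v(1,t)=0$ and $v(x,0) = -(1-x)g_0(0)$. Expand $v(x,t) = \sum_{m\geq 1} b_m(t)\sin(m\pi x)$; using $\int_0^1(1-x)\sin(m\pi x)\D x = 1/(m\pi)$, each Fourier coefficient solves the scalar linear ODE
\[\ri b_m'(t) - m^2\pi^2 b_m(t) = -\frac{2\ri}{m\pi}\, g_0'(t), \qquad b_m(0) = -\frac{2}{m\pi}\, g_0(0).\]

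Next, insert the Fourier series $g_0(t) = \sum_{n\in\ZZ} G^{(0)}_n \re^{\ri n\omega t}$ and solve the ODE mode by mode. The forcing $\re^{\ri n\omega t}$ contributes a particular response proportional to $(m^2\pi^2 + n\omega)^{-1}\re^{\ri n\omega t}$ when $m^2\pi^2 + n\omega\neq 0$, and a secular response proportional to $t\,\re^{\ri n\omega t}$ at the resonance $m^2\pi^2 + n\omega = 0$. Using the hypothesis $\alpha\omega + \beta\pi^2 = 0$, this resonance condition is exactly $\alpha m^2 - \beta n = 0$, i.e.\ $(n,m)\in\mathcal R$ (with $m\geq 1$ and $n\leq-1$). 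If some such $(n_\ast,m_\ast)$ has $G^{(0)}_{n_\ast}\neq 0$, the corresponding secular contribution to $b_{m_\ast}$ is nonzero, so $b_{m_\ast}(t) = C_\ast\, t\,\re^{\ri n_\ast\omega t} + O(1)$ with $C_\ast\neq 0$. Pairing $u(\cdot,t)$ against $2\sin(m_\ast\pi x)$ extracts $b_{m_\ast}(t) + 2g_0(t)/(m_\ast\pi)$, whose modulus is bounded above by a constant multiple of $\lVert u(\cdot,t)\rVert_{\Lebesgue^\infty(0,1)}$; since the second summand is bounded in $t$ while the first grows linearly, $\lVert u(\cdot,t)\rVert_{\Lebesgue^\infty(0,1)}\to\infty$.

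When no such resonance occurs, every $b_m$ is a convergent superposition of driven modes $\re^{\ri n\omega t}$ together with the single homogeneous mode $\re^{-\ri m^2\pi^2 t}$, whose amplitude is pinned by the initial datum. The commensurability $\omega = |\beta|\pi^2/\alpha$ places each such frequency on the lattice $(\pi^2/\alpha)\ZZ$, since $n\omega = (n|\beta|)\cdot(\pi^2/\alpha)$ and $m^2\pi^2 = (m^2\alpha)\cdot(\pi^2/\alpha)$. Every individual term is therefore $(2\alpha/\pi)$-periodic, so reassembling the sine series and undoing the lift yields $u$ periodic with period $2\alpha/\pi = (2/\pi)\max(1,\alpha)$, using $\alpha\geq 1$.

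The main obstacle is controlling the interchange of summation and limit when passing from mode-wise growth, respectively mode-wise periodicity, to $\Lebesgue^\infty$-level statements about $u$. This requires decay of $(G^{(0)}_n)_{n\in\ZZ}$ inherited from regularity of $g_0$, combined with a quantitative lower bound $|m^2\pi^2 + n\omega|\geq c > 0$ for $(n,m)\notin\mathcal R$---a straightforward Diophantine estimate since $\mathcal R$ is a sublattice of $\ZZ^2$. These bounds deliver absolute and uniform convergence of the sine series in both cases and justify the manipulations above.
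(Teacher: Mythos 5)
Your plan is essentially correct and, once the convergence details you flag are filled in, it proves the full statement. Bear in mind, though, that the paper does not actually prove Theorem~\ref{djt2}: it quotes it from Dujardin~\cite{Duj2009a}, and its own contribution is to reprove the obstruction and the classification (Propositions~\ref{prop:NewDJT2} and~\ref{prop:LSPeriodic}) by a different, complex-spectral route. Your argument --- lift the Dirichlet datum by $(1-x)g_0(t)$, expand in $\sin(m\pi x)$, solve the forced ODEs, and read off the secular resonance $n\omega+m^2\pi^2=0$, which under $\alpha\omega+\beta\pi^2=0$ is exactly $\alpha m^2-\beta n=0$ --- is precisely the classical Fourier-series method the paper attributes to Dujardin and deliberately avoids. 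The paper instead assumes strong asymptotic periodicity, writes $q_n(k)$ as the ratio of entire functions~\eqref{qnls}, and demands removability of the poles at $k=\pm\sqrt{n\omega}$; the resulting linear systems \eqref{eqn:LS.Dirichlet.LinearSystem.+} and \eqref{eqn:LS.Dirichlet.LinearSystem.-} for the Neumann coefficients degenerate exactly when $\sqrt{n\omega}\in\pi\ZZ$, yielding a contradiction, while the periodic case comes from the splitting $u=u_1+u_2$ with $u_1$ exactly $T$-periodic and $u_2$ the $2/\pi$-periodic homogeneous-Dirichlet evolution. Your route buys the quantitative conclusion $\lVert u(\cdot,t)\rVert_{\Lebesgue^\infty(0,1)}\to\infty$ (linear growth of the single mode $b_{m_*}$), which the paper's analyticity argument cannot deliver --- the authors explicitly concede they only exclude strong asymptotic periodicity; the paper's route buys independence from any eigenfunction expansion, which is what lets it extend to the third-order Stokes problems where no complete eigensystem exists (Remark~\ref{rmk:WhyIntegralRep}). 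Two minor economies you could exploit: the blow-up half needs only boundedness of the non-resonant part of the one coefficient $b_{m_*}$ (no uniform-in-$m$ control), and in the non-resonant case the lattice argument $n\omega,\,m^2\pi^2\in(\pi^2/\alpha)\ZZ$ already gives $u(\cdot,t+2\alpha/\pi)=u(\cdot,t)$ from $\Lebesgue^2$ convergence of the sine series, so the $\Lebesgue^\infty$ machinery is needed only for pointwise statements.
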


\begin{rmk}
    In~\cite{Duj2009a}, the factor $\max(1,-\alpha/\beta)$ appears instead of $\max(1,\alpha)$. We have corrected this oversight.
\end{rmk}

Finally, if the time period and the spatial interval length are independent over $\QQ$, the solution cannot in general be asymptotically periodic.

\begin{theorem} \label{djt3}
    Let $u(x,t)$ denote the solution of the boundary value problem~\eqref{lS},~\eqref{ls.ic}--\eqref{ls.bc}.
    Assume that the period $T$ of the boundary datum $g_0(t)$ and the factor $2/\pi$
    are linearly independent over $\QQ$. Then  for all $x\in(0,1)$ the function $t\to u(x,t)$ is not asymptotically periodic.
\end{theorem}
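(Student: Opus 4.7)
The plan is to prove the theorem by contradiction, using the $u=u_1+u_2$ decomposition from the introduction to exhibit two incommensurate time-scales in the solution, and then invoking Bohr almost-periodicity to preclude asymptotic periodicity.

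\textbf{Construction of $u_1$.} Let $u_1$ be the exactly $T$-periodic solution of~\eqref{lS} with Dirichlet data $(g_0,0)$. Its asymptotic Fourier coefficients are forced by the Dirichlet-to-Neumann proposition above: setting $H^{(0)}_{-n}=0$ in~\eqref{eqn:LS.qnnumerator.-n} gives, for $n\in\NN$,
\[
    H^{(1)}_{-n} = -\frac{\sqrt{n\omega}\,G^{(0)}_{-n}}{\sin(\sqrt{n\omega})}, \qquad G^{(1)}_{-n} = \cos(\sqrt{n\omega})H^{(1)}_{-n},
\]
and analogous expressions with $\sinh,\cosh$ from~\eqref{eqn:LS.qnnumerator.+n} (which are never singular). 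The denominator $\sin(\sqrt{n\omega})$ vanishes exactly when $n\omega=\ell^2\pi^2$ for some $\ell\in\NN$, i.e.\ $T/(2/\pi)=n/\ell^2\in\QQ$; this is excluded by hypothesis, so every Fourier coefficient is finite. The resulting Fourier series for $u_1$ defines a smooth $T$-periodic solution with boundary data $(g_0,0)$, and we set $u_T(x):=u_1(x,0)$.

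\textbf{Periodicity of $u_2$.} By construction $u_2$ solves the homogeneous Dirichlet problem with initial data $-u_T$, so separation of variables gives
\[
    u_2(x,t)=\sum_{m=1}^\infty\widehat u_{T,m}\,\re^{-\ri m^2\pi^2 t}\sin(m\pi x), \qquad \widehat u_{T,m}=-2\int_0^1 u_T(x)\sin(m\pi x)\D x.
\]
Each time factor is $(2/\pi)$-periodic, since $m^2\pi^2\cdot(2/\pi)\in 2\pi\ZZ$, so $u_2(x,\cdot)$ is $(2/\pi)$-periodic. A direct computation from the series representation of $u_T$ shows that, whenever $g_0$ is non-constant, infinitely many $\widehat u_{T,m}$ are non-zero, and hence $u_2(x,\cdot)$ is non-constant for every $x\in(0,1)$.

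\textbf{Conclusion via Bohr almost-periodicity.} Fix $x\in(0,1)$. The function $u(x,\cdot)=u_1(x,\cdot)+u_2(x,\cdot)$ is Bohr almost-periodic as a sum of two continuous periodic functions. Assume, toward a contradiction, that it is asymptotically $\tilde T$-periodic with continuous $\tilde T$-periodic limit $\phi$. Then $u(x,\cdot)-\phi$ is almost-periodic and tends to zero, hence vanishes identically, so $u(x,\cdot)=\phi$ is itself $\tilde T$-periodic. On the other hand, any period $\tau$ of $u_1(x,\cdot)+u_2(x,\cdot)$ forces $u_1(x,\cdot+\tau)-u_1(x,\cdot)$ to be simultaneously $T$-periodic and $(2/\pi)$-periodic; by the $\QQ$-independence of $T$ and $2/\pi$ such a function is constant, and the boundedness of $u_1$ forces the constant to vanish. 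Thus $\tau$ is also a period of $u_1(x,\cdot)$ and, by symmetry, of $u_2(x,\cdot)$, whence $\tau\in T\ZZ\cap(2/\pi)\ZZ=\{0\}$, contradicting the existence of any positive $\tilde T$.

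\textbf{Main obstacle.} Two non-routine points must be handled carefully. First, the factors $1/\sin(\sqrt{n\omega})$ in the formulas for $H^{(1)}_{-n}$ can be arbitrarily small for large $n$, a small-denominator phenomenon governed by the Diophantine properties of $T\pi/2$; controlling this to ensure that the series defining $u_1$ and $u_T$ genuinely converges may require a regularity hypothesis on $g_0$. Second, the non-triviality claim in the second step --- that $\widehat u_{T,m}\neq 0$ for infinitely many $m$ and that the corresponding $\sin(m\pi x)$ do not vanish simultaneously at any $x\in(0,1)$ --- is the real technical heart of the argument, and is established by unpacking the explicit formula for $u_T$ to rule out accidental cancellations. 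Granted these, the remaining steps are the Dirichlet-to-Neumann analysis of the first paragraph, the standard eigenfunction expansion of the second, and the classical Bohr principle of the third.
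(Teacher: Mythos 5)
Your overall strategy --- decompose $u=u_1+u_2$ with $u_1$ exactly $T$-periodic and $u_2$ a $(2/\pi)$-periodic sine series, then rule out asymptotic periodicity of the sum --- is precisely the template the paper uses for Proposition~\ref{prop:LSPeriodic}; note that the paper does not reprove Theorem~\ref{djt3} itself but quotes it from \cite{Duj2009a}, where the argument is a direct decomposition and asymptotic analysis of the Fourier series. Your Bohr almost-periodicity endgame is in fact more careful than the paper's, which simply asserts that a sum of continuous periodic functions with $\QQ$-incommensurate periods ``cannot be periodic''; your reduction (an almost-periodic function that is asymptotically periodic is exactly periodic, and any common period must lie in $T\ZZ\cap(2/\pi)\ZZ=\{0\}$) is the right way to make that assertion precise.

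However, two of the points you defer are genuine gaps rather than routine verifications, and the second is exactly where the content of the theorem lives. First, the construction of $u_1$: under the bare hypothesis that $T$ and $2/\pi$ are $\QQ$-independent, the denominators $\sin\sqrt{n\omega}$ in \eqref{eqn:Neumannrelations-} are nonzero but can decay faster than any power of $n$ when $T\pi/2$ is a Liouville number, so no finite regularity of $g_0$ guarantees convergence of the series defining $u_1$ and $u_T$; as written, your argument covers only Diophantine ratios (the paper's Proposition~\ref{prop:LSPeriodic} shares this unaddressed issue, but Theorem~\ref{djt3} claims the conclusion for every irrational ratio). Second, and more seriously, the conclusion ``for all $x\in(0,1)$'' requires that $u_2(x,\cdot)$ be non-constant at every such $x$, i.e.\ that for each $x$ there exist $m$ with $\widehat u_{T,m}\sin(m\pi x)\neq0$; if, say, every nonzero $\widehat u_{T,m}$ had $m$ divisible by a fixed $q$, the theorem would fail at $x=1/q$, and $u(x,\cdot)$ would there be $T$-periodic. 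You label this ``the real technical heart'' and assert that it follows from ``unpacking the explicit formula,'' but no such computation is given, and it is not a formality: it amounts to showing that $u_T$ is not a lacunary sine series, which must be extracted from the explicit form \eqref{heatuT} of $u_T$ (for instance, when $g_0(0)\neq0$ the jump of the odd extension of $u_T$ at $x=0$ forces $\widehat u_{T,m}\sim 2g_0(0)/(m\pi)$, so all large $m$ contribute; the case $g_0(0)=0$ needs a separate argument). Until these two points are supplied, the proposal is a correct plan, consistent with the paper's own machinery, but not yet a proof.
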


\subsection{Revisiting the existing results using the \texorpdfstring{$Q$}{Q} equation approach} \label{ssec:ls.revisiting}
We aim to reproduce parts of Dujardin's theorems~\ref{djt1},~\ref{djt2} and~\ref{djt3} using our complex spectral approach.
To prove the  results stated above, Dujardin exploits classical tools in real analysis to decompose the Fourier series explicitly in several components, and study their specific continuity and asymptotic properties.
In contrast, we will introduce a general, algorithmic decomposition of the solution in two periodic parts with different periods.

Suppose that we are studying an IBVP with given, sufficiently regular, initial condition
\be\label{lSIC}
u(x,0)=u_0(x),
\qquad x\in(0,1),
\ee
and Dirichlet boundary conditions with asymptotically periodic data
\be \label{eqn:BCForPropNewDJT2}
    u(0,t)=g_0(t),\qquad u(1,t)=h_0(t).
\ee
We assume that the solution $u$ is strongly asymptotically periodic.

We start  by considering an example to illustrate the nature of the constraints that may arise.
\begin{eg} \label{exa1}
    Let the given boundary conditions be
    \[
        g_0(t)=u(0,t)=\sin(\pi^2 t), \quad h_0(t)=u(1,t)=0, \qquad t\geq0.
    \]
    Then the given data are $\frac 2 \pi$-periodic.
    Let $\omega=2\pi/T=\pi^2$, so that
    \[
        g_0(t)=\sum_{n\in\Z} G^{(0)}_n\re^{in\omega t}, \qquad
        h_0(t)=\sum_{n\in\Z} H^{(0)}_n\re^{in\omega t}, \qquad H^{(0)}_n=0.
    \]
    Under our assumption that $u$ is strongly asymptotically time periodic with period $T$, so is $Q$ and, asymptotically, $Q$, $g_1$ and $h_1$ can be represented as Fourier series~\eqref{eqn:AsymptoticBdryCoeffs}.
    The spectral equation~\eqref{eqn:Q} implies
    \be\label{qrel}
        q_n(k)=\frac{(ik G^{(0)}_n+G^{(1)}_n)-\re^{-ik}(ik H^{(0)}_n+H^{(1)}_n)}{k^2+n\omega}.
    \ee
    The coefficients $G_n^{(0)}$ and $H_n^{(0)}$ are known,
    \be
        G_n^{(0)}=\begin{cases}
        0&\abs{n}\neq 1,
        \\
        -\frac i 2& n=1,
        \\
        \frac i 2& n=-1,
        \end{cases}\qquad H_n^{(0)}=0.
        \label{Fn0}
    \ee
    From these and the condition of analyticity on $Q$ we can reconstruct the non-zero coefficients $G_n^{(1)}$ and $H_n^{(1)}$ using formulae~\eqref{eqn:Neumannrelations+} and~\eqref{eqn:Neumannrelations-}:
    \begin{align*}
        G_1^{(1)} &=
        -\frac {i\pi} {2\sinh \pi}\cosh \pi,
        &
        H_1^{(1)} &=
        -\frac {i\pi} {2\sinh \pi},
        \\
        G_{-1}^{(1)} &=
        \frac {i\pi} {2\sin \pi}\cos \pi,
        &
        H_{-1}^{(1)} &=
        \frac {i\pi} {2\sin \pi}.
    \end{align*}
  These expressions show that $G_{-1}^{(1)}$ and $H_{-1}^{(1)}$ are not well defined.
    Indeed, the assumption of strong asymptotic periodicity of $u$ leads to a contradiction, and we conclude that it is not possible to represent the solution as a time-periodic function of period $T$, even in an asymptotic sense.
\end{eg}
Example~\ref{exa1} is consistent with the statement of theorem~\ref{djt1}, and it illustrates the general result we now state, which is a slight generalisation of the first paragraph of Dujardin's theorem~\ref{djt2}.

\begin{prop} \label{prop:NewDJT2}
    Suppose $u$ satisfies the linear Schr\"odinger equation~\eqref{lS} and boundary conditions~\eqref{eqn:BCForPropNewDJT2} for known asymptotically $T$-periodic functions $g_0(t)$ and $h_0(t)$, in which
    \[
        g_0(t) = \sum_{n\in\ZZ} G^{(0)}_n \re^{\ri n \omega t} + o(1), \qquad\qquad
        h_0(t) = \sum_{n\in\ZZ} H^{(0)}_n \re^{\ri n \omega t} + o(1),\qquad \omega=\frac{2\pi}T.
    \]
    If there exist positive integers $(m,n)$ for which $\sqrt{n\omega}=m\pi$ and
    \[
        G^{(0)}_{-n} - (-1)^m H^{(0)}_{-n} \neq 0,
    \]
    then $u$ cannot be strongly asymptotically periodic.
\end{prop}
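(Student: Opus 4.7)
The plan is to exploit the resonance between the asymptotic spectral equation~\eqref{qnls} at the zeros of the denominator for negative Fourier mode $-n$, which are precisely $k=\pm\sqrt{n\omega}$. Assume for contradiction that $u$ is strongly asymptotically $T$-periodic. Then $Q(k,t)$ is strongly asymptotically $T$-periodic, the asymptotic expansions~\eqref{eqn:AsymptoticBdryCoeffs} apply, and the derivation leading to~\eqref{qnls} is valid. Because $q_{-n}(k)$ is, by construction, an entire function of $k$, the condition~\eqref{eqn:LS.qnnumerator.-n} must hold at both zeros of $n\omega+k^2$ with the sign flipped to $-n$, namely $k=\pm\sqrt{n\omega}$.

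Next I would substitute the hypothesis $\sqrt{n\omega}=m\pi$ into the two resulting equations. At $k=\pm m\pi$, the factor $\re^{-\ri k}$ takes the same value $(-1)^m$, so the two vanishing-numerator conditions read
\begin{align*}
    0 &= \ri m\pi\bigl(G^{(0)}_{-n}-(-1)^m H^{(0)}_{-n}\bigr)+\bigl(G^{(1)}_{-n}-(-1)^m H^{(1)}_{-n}\bigr), \\
    0 &= -\ri m\pi\bigl(G^{(0)}_{-n}-(-1)^m H^{(0)}_{-n}\bigr)+\bigl(G^{(1)}_{-n}-(-1)^m H^{(1)}_{-n}\bigr).
\end{align*}
Subtracting these two identities isolates $G^{(0)}_{-n}-(-1)^m H^{(0)}_{-n}=0$ (since $m\geq 1$ and so $m\pi\neq 0$), which contradicts the assumption of the proposition.

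The logical structure is therefore identical to the mechanism already displayed in Example~\ref{exa1}: when the denominator of $q_{-n}$ develops real roots at integer multiples of $\pi$, both roots force one linear condition on the Neumann coefficients and one on the Dirichlet coefficients, but the latter cannot be solved freely and must already be satisfied by the prescribed data. I expect no genuine obstacle: the only delicate point is ensuring that the passage from~\eqref{qnls} to the vanishing-numerator requirement at these two real roots is unconditional, which follows from the entirety of each $q_{-n}$, itself a direct consequence of the definition $q_{-n}(k)=-\frac{1}{T}\int_0^T\re^{\ri n\omega t}\int_0^1 \re^{-\ri k x}u(x,t)\,\D x\D t$ together with the assumed strong asymptotic periodicity of $u$. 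Finally, I would remark that this clean resonance does \emph{not} occur for positive Fourier modes, for which the denominator's zeros $\pm\ri\sqrt{n\omega}$ lie on the imaginary axis and the exponential factor $\re^{-\ri k}=\re^{\pm\sqrt{n\omega}}$ takes different real values at the two zeros; this asymmetry explains why the obstruction arises solely from the negative index $-n$.
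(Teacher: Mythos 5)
Your proof is correct and follows essentially the same route as the paper's: both rest on the entirety of $q_{-n}$ forcing the numerator of~\eqref{qnls} to vanish at $k=\pm\sqrt{n\omega}=\pm m\pi$, where the paper phrases the contradiction as the singularity of the $2\times2$ system~\eqref{eqn:LS.Dirichlet.LinearSystem.-} for the Neumann coefficients, while you extract the same consistency condition $G^{(0)}_{-n}=(-1)^m H^{(0)}_{-n}$ by directly subtracting the two vanishing-numerator identities. Your direct subtraction is, if anything, a slightly cleaner way to exhibit the inconsistency than the paper's appeal to rank deficiency.
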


\begin{proof}
    Suppose $u$ is strongly asymptotically time periodic with period $T$.
By hypothesis, both sequences $(G_{n}^{(0)})_{n\in\ZZ}$ and $(H_{n}^{(0)})_{n\in\ZZ}$ are known.
We seek the sequences $(G_{n}^{(1)})_{n\in\ZZ}$ and $(H_{n}^{(1)})_{n\in\ZZ}$ which represent the periodic (nondecaying) parts of the Neumann boundary values.
Equations~\eqref{eqn:LS.qnnumerator.n0} provide explicit formulae for $G_0^{(1)}$ and $H_0^{(1)}$.
For each $n\in\NN$, equation~\eqref{eqn:LS.qnnumerator.+n} yields the following linear system:
\begin{equation} \label{eqn:LS.Dirichlet.LinearSystem.+}
    \begin{pmatrix} 1 & - \re^{\sqrt{n\omega}} \\ 1 & - \re^{-\sqrt{n\omega}} \end{pmatrix}
    \begin{pmatrix} G^{(1)}_{n} \\ H^{(1)}_{n} \end{pmatrix}
    =
    \begin{pmatrix}\sqrt{n\omega} \left( G^{(0)}_{n} - \re^{\sqrt{n\omega}}H^{(0)}_{n} \right) \\ -\sqrt{n\omega} \left( G^{(0)}_{n} - \re^{-\sqrt{n\omega}}H^{(0)}_{n} \right) \end{pmatrix}.
\end{equation}
Solving this system, we find
\begin{subequations} \label{eqn:Neumannrelations+}
\begin{align}
    \label{eqn:Neumannrelations.G+}
    G^{(1)}_{n} &= \frac{\sqrt{n\omega}}{\sinh\sqrt{n\omega}} \left( -H^{(0)}_{n} + G^{(0)}_{n} \cosh\sqrt{n\omega}\right), \\
    \label{eqn:Neumannrelations.H+}
    H^{(1)}_{n} &= \frac{\sqrt{n\omega}}{\sinh\sqrt{n\omega}} \left(  G^{(0)}_{n} - H^{(0)}_{n} \cosh\sqrt{n\omega}\right).
\end{align}
\end{subequations}
Similarly, for $n\in\NN$, using~\eqref{eqn:LS.qnnumerator.-n}, we find the linear system
\begin{equation} \label{eqn:LS.Dirichlet.LinearSystem.-}
    \begin{pmatrix} 1 & - \re^{-\ri\sqrt{n\omega}} \\ 1 & - \re^{\ri\sqrt{n\omega}} \end{pmatrix}
    \begin{pmatrix} G^{(1)}_{-n} \\ H^{(1)}_{-n} \end{pmatrix}
    =
    \begin{pmatrix}-\ri\sqrt{n\omega} \left( G^{(0)}_{-n} - \re^{-\ri\sqrt{n\omega}}H^{(0)}_{-n} \right) \\ \ri\sqrt{n\omega} \left( G^{(0)}_{-n} - \re^{\ri\sqrt{n\omega}}H^{(0)}_{-n} \right) \end{pmatrix}.
\end{equation}
Solving this system we find
\begin{subequations} \label{eqn:Neumannrelations-}
\begin{align}
    \label{eqn:Neumannrelations.G-}
    G^{(1)}_{-n} &= \frac{\sqrt{n\omega}}{\sin\sqrt{n\omega}} \left( -H^{(0)}_{-n} + G^{(0)}_{-n} \cos\sqrt{n\omega}\right), \\
    \label{eqn:Neumannrelations.H-}
    H^{(1)}_{-n} &= \frac{\sqrt{n\omega}}{\sin\sqrt{n\omega}} \left(  G^{(0)}_{-n} - H^{(0)}_{-n} \cos\sqrt{n\omega}\right).
\end{align}
\end{subequations}
    It follows that, as $t\to\infty$, the Neumann boundary values can be expressed as asymptotically valid Fourier series
    \[
        u_x(0,t) = \sum_{n\in\ZZ} G^{(1)}_n \re^{\ri n \omega t} + o(1), \qquad\qquad
        u_x(1,t) = \sum_{n\in\ZZ} H^{(1)}_n \re^{\ri n \omega t} + o(1),
    \]
    and the negatively indexed coefficients of those series satisfy the linear system~\eqref{eqn:LS.Dirichlet.LinearSystem.-}. Indeed, formulae~\eqref{eqn:Neumannrelations+} must be valid for all $n\in\NN$. In particular, it must hold that
$$
\sin\sqrt{n\omega}\neq 0 \,\,i.e.\,\, \sqrt{n\omega}\neq m\pi,\qquad m\in\mathbb Z.
$$

  The hypothesis in the statement of the  proposition implies that there exists a natural number $n$ for which the system is not full rank, hence does not have a unique solution. Therefore we cannot find a solution to the Dirchlet-to-Neumann map under this hypothesis, and we deduce a contradiction with the assumption of strong asymptotic time periodicity of $u$ with period $T$.
\end{proof}

\begin{rmk}

Strictly speaking, we have only proved that the solution $u$ cannot be strongly asymptotically $T$-periodic.
Since we assume that $T$ is the fundamental common period of the nondecaying parts of $g_0$ and $h_0$,
if $u$ is strongly asymptotically periodic, its period must be an integer multiple of $T$: $MT$, say.
    But then $\omega\mapsto\omega/M$, and $G_n^{(j)}$, $H_n^{(j)}$ are reindexed accordingly, with new $G_n^{(0)}=0$, $H_n^{(0)}=0$ inserted wherever $M\nmid n$.
    The reindexing preserves the product $n\omega$ in the denominator of equation~\eqref{qrel} and, by the above argument, we can still conclude that $u$ is not strongly asymptotically periodic with period $MT$.
    
\end{rmk}

The application of our approach to problems with periodic boundary conditions requires an assumption of (strong asymptotic) time periodicity of $u$.
Using this approach, we can only hope to derive necessary  conditions for (asymptotic) periodicity. To  study sufficient conditions to guarantee such periodicity, we need to add to this the analysis of an explicit representation of the solution.

However, it is important to note that our  approach derives necessary  conditions without the need for such explicit representation.
Moreover, the spectral equation~\eqref{Qeqnls} is independent of the initial condition.

It turns out that the initial condition plays an important role in determining the time dependence of the solution, and in particular it can affect its time periodicity. To illustrate this, we consider the homogeneous Dirichlet problem for the linear Schr\"odinger equation, and we use the formulae~\eqref{eqn:Neumannrelations.G+} derived from  equation~\eqref{Qeqn}  to prove that there is a specific initial condition for which the solution of the problem is time periodic with period exactly equal to $2/\pi$. This, combined with the explicit sine series solution representation, will give the full characterisation of time-periodicity.

\begin{prop} \label{prop:LSPeriodic}
    Consider equation~\eqref{lS}, with boundary conditions~\eqref{eqn:BCForPropNewDJT2}, with given initial condition \eqref{lSIC}  and $T$-periodic boundary data $g_0$ and $h_0$.

    Unless $u(x,0)=u_T(x)$, where $u_T$ is given in \eqref{heatuT} below, the solution of this problem can only be periodic if $T$ and $2/\pi$ are linearly dependent over $\mathbb Z$. In this case, the period is the least common multiple of $T$ and $2/\pi$.
\end{prop}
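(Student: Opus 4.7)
The plan is to explicitly realise the decomposition $u = u_1 + u_2$ outlined in the introduction. To construct $u_T$, I would expand the target $T$-periodic solution as a time Fourier series $u_1(x,t) = \sum_{n\in\ZZ} U_n(x)\re^{\ri n\omega t}$ with $\omega = 2\pi/T$. Substituting into~\eqref{lS} reduces the PDE to the family of ODEs $U_n''(x) = n\omega U_n(x)$ with Dirichlet data $U_n(0) = G_n^{(0)}$, $U_n(1) = H_n^{(0)}$. Provided the non-resonance hypothesis of Proposition~\ref{prop:NewDJT2} holds — equivalently, $\sinh\sqrt{n\omega}\neq 0$ for $n > 0$ and $\sin\sqrt{|n|\omega}\neq 0$ for $n < 0$ — each such two-point problem admits a unique closed-form solution built from the fundamental systems $\{\cosh,\sinh\}$ or $\{\cos,\sin\}$ according to the sign of $n$. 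I would then set $u_T(x) := \sum_n U_n(x) = u_1(x,0)$; the resulting $u_1$ satisfies \eqref{lS} with Dirichlet data $g_0, h_0$ and is $T$-periodic by construction.

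Next, I would analyse $u_2 := u - u_1$, which solves \eqref{lS} with homogeneous Dirichlet conditions and initial data $u_0 - u_T$. Standard spectral theory for $-\partial_{xx}$ on $[0,1]$ with Dirichlet conditions yields the absolutely convergent representation $u_2(x,t) = \sum_{m\geq 1} b_m \sin(m\pi x)\,\re^{-\ri m^2\pi^2 t}$, where $b_m = 2\int_0^1 (u_0(x) - u_T(x))\sin(m\pi x)\,\D x$. Since $\re^{-\ri m^2\pi^2(t + 2/\pi)} = \re^{-\ri m^2\pi^2 t}$ for every $m \in \NN$, each summand has $2/\pi$ as a period, so $u_2$ is itself $(2/\pi)$-periodic.

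To conclude, I would compare the two periods. If $T$ and $2/\pi$ are $\QQ$-dependent, their least common multiple $P$ is simultaneously a period of $u_1$ and of $u_2$, hence of $u$. Conversely, assume $u_0 \neq u_T$, so that at least one $b_m \neq 0$, and suppose $u$ has period $P$; its time Fourier spectrum then lies in $(2\pi/P)\ZZ$. But the spectrum of $u_1$ lies in $\omega\ZZ$, while the spectrum of $u_2$ lies in $-\pi^2\{m^2 : m\geq 1\} \subset \pi^2\ZZ$. The presence of at least one nontrivial frequency in each set forces both $\omega$ and $\pi^2$ to be rationally commensurable with $2\pi/P$, hence with each other, giving $\omega/\pi^2 = 2/(T\pi) \in \QQ$ and so $\QQ$-dependence of $T$ and $2/\pi$. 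Minimality of $P$ among the common periods then identifies it as $\operatorname{lcm}(T, 2/\pi)$.

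The main technical obstacle is the spectral-separation step: one must justify that the time Fourier spectra of $u_1$ and $u_2$ do not interfere, so that the decomposition $u = u_1 + u_2$ genuinely separates the two periodicities and the Bohr--Fourier coefficients of $u$ recover those of $u_1$ and $u_2$ unambiguously. Disjointness of the spectra modulo zero is precisely what the non-resonance hypothesis of Proposition~\ref{prop:NewDJT2} ensures: without it, some $n\omega$ from $u_1$ could coincide with a frequency $-m^2\pi^2$ from $u_2$, allowing cancellation and obscuring the argument. Once this disjointness is in hand, uniqueness of the Bohr--Fourier coefficients of an almost-periodic function delivers both the necessary commensurability condition and the identification of the period as $\operatorname{lcm}(T, 2/\pi)$.
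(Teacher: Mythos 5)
Your proposal is correct and follows essentially the same route as the paper: the same decomposition $u=u_1+u_2$, with $u_T:=u_1(\cdot,0)$ built by solving the time-Fourier ODEs $U_n''=n\omega U_n$ under the non-resonance condition of Proposition~\ref{prop:NewDJT2}, the same Dirichlet sine series giving $u_2$ period $2/\pi$, and the same least-common-multiple conclusion. The only place you go beyond the paper is the converse direction, where your Bohr--Fourier spectral-disjointness argument (the spectra $\omega\ZZ$ and $\{-m^2\pi^2\}$ can only overlap at a resonance $\sqrt{|n|\omega}=m\pi$, which is excluded) makes rigorous the paper's bare assertion that a sum of a $T$-periodic and a $(2/\pi)$-periodic function cannot be periodic when $T$ and $2/\pi$ are incommensurate.
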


The proof we give provides the template for the proof in all other cases we consider, and has two constructive steps. We first assume that $u$ is asymptotically periodic, and that the necessary condition for periodicity characterised in proposition~\ref{prop:NewDJT2} hold, so that the systems \eqref{eqn:LS.Dirichlet.LinearSystem.+} and \eqref{eqn:LS.Dirichlet.LinearSystem.-} can be uniquely solved, determining the Dirichlet-to-Neumann map.
In the second step,  we construct an explicit initial datum $u_T$ with the property that the solution $u_1(x,t)$ of the boundary value problem, with given $T$ periodic Dirichlet boundary conditions and $u_T$ as initial condition, is {\em exactly} $T$-periodic.
We then show that the solution $u_2(x,t)$ of the homogeneous Dirichlet problem for this equation is time-periodic of period $\frac 2 \pi$.
Finally, we use the linearity of the problem to write the solution of the problem with generic initial datum $u_0$ and time periodic boundary conditions in the form
$$
u(x,t)=u_1(x,t)+u_2(x,t).
$$

\begin{proof}[Proof of proposition~\ref{prop:LSPeriodic}.]

Assuming that there exists a $T$-periodic solution $u_1(x,t)$ of the linear Schr\"odinger equation with $T$-periodic Dirichlet data, the Neumann boundary values $(u_1)_x(0,t)$ and $(u_1)_x(1,t)$ may be explicitly reconstructed.
Moreover, equation~\eqref{qnls} provides expressions for $q_n$, whence $Q$ can be found. Employing the inverse Fourier transform yields
\begin{align}
    u_1(x,t) &= G_0^{(0)} + (H_0^{(0)}-G_0^{(0)})x + \sum_{n\in\ZZ\setminus\{0\}} \re^{\ri n\omega t} U_n(x), \nonumber \\
    U_n(x) &= G_n^{(0)} \cos(\sqrt{n\omega} x) + \frac{H_n^{(0)}-\cos(\sqrt{n\omega})G_n^{(0)}}{\sin(\sqrt{n\omega})} \sin(\sqrt{n\omega} x).\label{Un}
\end{align}
The function $u_1(x,t)$ takes the following values at $t=0$:
\be\label{heatuT}
u_T(x)=G_0^{(0)}+(H_0^{(0)}-G_0^{(0)})x+\sum_{n\in\mathbb Z\setminus 0}U_n(x).
\ee
Therefore, if we pose the problem with $u_T$ as initial condition and the given boundary conditions, the formula \eqref{Un} defines a function that satisfies the PDE, as well as the initial and boundary conditions.
We conclude that the solution $u_1(x,t)$ of the problem with initial condition $u(x,0)=u_T(x)$ and $T$-periodic Dirichlet boundary conditions is $T$-periodic.

Now we consider the case that the given boundary conditions are homogeneous, $g_0(t)=h_0(t)=0$, and the initial condition is  $u_0(x)-u_T(x)$, with $u_0$ the given initial condition of the original problem, and $u_T$ defined above in \eqref{heatuT}. In this case, a series representation of the solution can be given in the form
$$
u_2(x,t)=\frac i 4\sum_{m=1}^\infty \hat u(m) \sin(m\pi x)\re^{-im^2\pi^2 t},
\qquad
\hat u(m)=\int_0^1 (u_0(x)-u_T(x))\re^ {-im\pi x}
$$
In particular, the time dependence is given through the functions $\exp(-\ri m^2\pi^2 t)$, so the solution $u_2(x,t)$ of this problem is time periodic with period $2/\pi$.

Finally, observe that since $u_T$ is not identically zero, we can write the solution of any given boundary value problem with given initial condition $u(x,0)=u_0(x)$ and  $T$-periodic given boundary conditions as the superposition of the solution of a homogeneous Dirichlet problem for the linear Schr\"odinger equation with initial datum $u_0-u_T$, and the solution of the problem with initial value $u_T$ and $T$-periodic Dirichlet boundary conditions.

Therefore, the solution of the linear Schr\"odinger equation with $T$-periodic Dirichlet data and any  initial condition can be expressed as the sum of a function of period $T$ and a function of period $2/\pi$, hence periodic with period given by the least common multiple of $T$ and $2/\pi$.
If $T$ and $\frac 2 {\pi}$ are not linearly dependent on $\mathbb Q$, the overall sum cannot be periodic.
\end{proof}

\section{The heat equation}

In this section, we study the heat equation~\eqref{heat} with a given initial condition and time periodic boundary conditions.
Although this equation is  well studied using classical methods, we analyse it here in order to  provide another example of the usefulness of our approach, and to contrast the results in this case with the case of the linear Schr\"odinger equation.
Indeed,  for the linear Schr\"odinger equation Proposition~\ref{prop:LSPeriodic} implies that, given Dirichlet boundary conditions, there exist cases when the solution cannot be periodic, or even asymptotically periodic. On the other hand, for the heat equation with given periodic boundary conditions, we expect that  the solution will usually be strongly asymptotically periodic with the same period as the boundary data. We prove this below for the case of Neumann boundary conditions, satisfying an  additional constraint, see the statement of Proposition \ref{prop:Heat.NeumanntoDirichlet} below. Moreover, in proving this proposition we provide the specific initial conditions that ensure that the solution is exactly periodic.  We conjecture that this case illustrates the generic picture for this PDE, when given Robin-type time-periodic boundary conditions.

\smallskip
Assuming that $u$ is strongly asymptotically periodic with period $T=2\pi/\omega$,  equation~\eqref{eqn:qn} becomes
\be\label{qnheat}
    q_n(k) = \frac{\left(\ri k G^{(0)}_n+G^{(1)}_n\right) - \re^{-\ri k}\left(\ri k H^{(0)}_n+H^{(1)}_n\right) }{\ri n\omega+k^2}, \qquad n\in\ZZ,
\ee
where the asymptotic Fourier coefficients are those in equations~\eqref{eqn:AsymptoticBdryCoeffs}.
For each $n\in\NN$, $q_{\pm n}$ are entire functions of $k\in\C$, so the system~\eqref{eqn:qnnumerator.n} yields the following equations:
\begin{subequations}
\label{eqn:Heat.qnnumerator}
\begin{align}
    \label{eqn:Heat.qnnumerator.+n}
    0 &= \left(\ri k G^{(0)}_n+G^{(1)}_n\right) - \re^{-\ri k}\left(\ri k H^{(0)}_n+H^{(1)}_n\right) & k &= \pm\ri\sqrt{\ri n\omega}, \\
    \label{eqn:Heat.qnnumerator.-n}
    0 &= \left(\ri k G^{(0)}_{-n}+G^{(1)}_{-n}\right) - \re^{-\ri k}\left(\ri k H^{(0)}_{-n}+H^{(1)}_{-n}\right) & k &= \pm\sqrt{\ri n\omega}.
\end{align}
Because $q_0$ is also entire, system~\eqref{eqn:qnnumerator.0} also holds:
\begin{equation} \label{eqn:Heat.qnnumerator.n0}
    G_0^{(1)}=H_0^{(1)}, \qquad\qquad H_0^{(0)}-G_0^{(0)} = H_0^{(1)}.
\end{equation}
\end{subequations}

To illustrate the use of our approach in this classical setting, we analyse the example of the  Neumann problem.

\begin{prop} \label{prop:Heat.NeumanntoDirichlet}
    Suppose that $u$ satisfies the heat equation \eqref{heat} with periodic Neumann boundary conditions
    \[
        u_x(0,t) = g_1(t) = \sum_{n\in\ZZ} G_n^{(1)} \re^{\ri n \omega t},
        \qquad
        u_x(1,t) = h_1(t) = \sum_{n\in\ZZ} H_n^{(1)} \re^{\ri n \omega t},
    \]
    with common period $T=2\pi/\omega$ and satisfying the constraint $G_0^{(1)}=H_0^{(1)}$.
    Then $u$ is strongly asymptotically $T$-periodic. Moreover, its Dirichlet boundary values are given by
    \[
        u(0,t) = g_0(t) = \sum_{n\in\ZZ} G_n^{(0)} \re^{\ri n \omega t} + o(1),
        \qquad
        u(1,t) = h_0(t) = \sum_{n\in\ZZ} H_n^{(0)} \re^{\ri n \omega t} + o(1),
    \]
    where, for $n\in\NN$ and $u_0(x)=u(x,0)$, the following formulae are valid:
    \begin{subequations} \label{eqn:Heat.NeumanntoDirichlet.Dirichlet}
    \begin{align} \label{eqn:Heat.NeumanntoDirichlet.Dirichlet.+n}
        G_n^{(0)} &= \frac{\csch\sqrt{\ri n\omega} H_n^{(1)} - \coth\sqrt{\ri n\omega} G_n^{(1)}}{\sqrt{\ri n \omega}},
        &
        H_n^{(0)} &= \frac{\coth\sqrt{\ri n\omega} H_n^{(1)} - \csch\sqrt{\ri n\omega} G_n^{(1)}}{\sqrt{\ri n \omega}},
        \\
        \label{eqn:Heat.NeumanntoDirichlet.Dirichlet.-n}
        G_{-n}^{(0)} &= \frac{\cot\sqrt{\ri n\omega} G_{-n}^{(1)} - \csc\sqrt{\ri n\omega} H_{-n}^{(1)}}{\sqrt{\ri n \omega}},
        &
        H_{-n}^{(0)} &= \frac{\csc\sqrt{\ri n\omega} G_{-n}^{(1)} - \cot\sqrt{\ri n\omega} H_{-n}^{(1)}}{\sqrt{\ri n \omega}},
        \\
        \label{eqn:Heat.NeumanntoDirichlet.Dirichlet.0}
        G_0^{(0)} &= \int_0^1 u_0(x)\D x - \frac{1}{2}G_0^{(1)},
        &
        H_0^{(0)} &= \int_0^1 u_0(x)\D x + \frac{1}{2}G_0^{(1)}.
    \end{align}
    \end{subequations}
    
\end{prop}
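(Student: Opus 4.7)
The plan is to reprise the three-step strategy used for Proposition~\ref{prop:LSPeriodic}: (i) under the assumption of strong asymptotic $T$-periodicity of $u$, derive the Dirichlet boundary coefficients $G_n^{(0)}, H_n^{(0)}$ from the entireness of $q_n(k)$ for every $n\in\ZZ$; (ii) construct an explicit initial datum $u_T(x)$ for which the corresponding Neumann problem has an exactly $T$-periodic solution $u_1(x,t)$; (iii) show that $u_2 := u - u_1$ solves a homogeneous Neumann problem with zero-mean initial data, so it decays to zero, giving $u = u_1 + u_2$ strongly asymptotically $T$-periodic.

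Step~(i) amounts to solving the algebraic systems~\eqref{eqn:Heat.qnnumerator.+n} and~\eqref{eqn:Heat.qnnumerator.-n} for $G_n^{(0)}, H_n^{(0)}$ in terms of the known $G_n^{(1)}, H_n^{(1)}$. Writing $\mu = \sqrt{\ri n\omega}$ (so $\arg\mu = \pi/4$ for $n\geq 1$), adding and subtracting the two equations in~\eqref{eqn:Heat.qnnumerator.+n} and then invoking $\cosh^2\mu - \sinh^2\mu = 1$ yields~\eqref{eqn:Heat.NeumanntoDirichlet.Dirichlet.+n}. The analogous manipulation of~\eqref{eqn:Heat.qnnumerator.-n}, using $\cos^2\mu + \sin^2\mu = 1$, yields~\eqref{eqn:Heat.NeumanntoDirichlet.Dirichlet.-n}. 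Both systems are nonsingular: $\sinh\mu \neq 0$ because $\mu$ is never an integer multiple of $\ri\pi$, and $\sin\mu \neq 0$ because $\mu$ has nonzero imaginary part and so is not a real multiple of $\pi$. For $n = 0$, the equations~\eqref{eqn:Heat.qnnumerator.n0} reduce to the hypothesis $G_0^{(1)} = H_0^{(1)}$ together with $H_0^{(0)} - G_0^{(0)} = G_0^{(1)}$, leaving one free constant to be fixed below.

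For step~(ii), I Fourier-invert each $q_n(k)$ to obtain the candidate $u_T(x) = \sum_{n \in \ZZ} U_n(x)$, whose $n$-th Fourier component $U_n(x)$ satisfies the PDE with time factor $\re^{\ri n\omega t}$ and matches the prescribed Neumann boundary values. For $n \neq 0$, $U_n$ is a superposition of $\cosh(\mu x), \sinh(\mu x)$ for $n > 0$ and of $\cos(\mu x), \sin(\mu x)$ for $n < 0$, in close analogy with~\eqref{Un}. The $n = 0$ component is the affine profile $U_0(x) = G_0^{(0)} + G_0^{(1)} x$. To fix its free constant, note that $\partial_t \int_0^1 u(x,t)\,\D x = h_1(t) - g_1(t)$, whose asymptotic time-average vanishes precisely because of the constraint $G_0^{(1)} = H_0^{(1)}$; integrating this in $t$ relates the asymptotic mean of $\int_0^1 u\,\D x$ to the initial mass $\int_0^1 u_0\,\D x$ together with the $n \neq 0$ Fourier modes of $h_1 - g_1$. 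Matching this asymptotic mean with the $n = 0$ value $(G_0^{(0)} + H_0^{(0)})/2$ of the affine profile produces~\eqref{eqn:Heat.NeumanntoDirichlet.Dirichlet.0}.

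For step~(iii), $u_2 = u - u_1$ satisfies the heat equation with homogeneous Neumann BCs and, by the above choice of $u_T$, with zero-mean initial datum. Expanding in the Neumann eigenbasis $\{\cos(m\pi x)\}_{m\geq 0}$, the zero mode vanishes while every remaining mode decays like $\re^{-m^2\pi^2 t}$, giving exponential decay of $u_2$ and $\partial_t u_2$ uniformly in $x$. Hence $u$ is strongly asymptotically $T$-periodic with the claimed Dirichlet Fourier coefficients. The main technical obstacle is the convergence of the series defining $u_T$ in a regularity class compatible with $u_0$: the factors $\csch\sqrt{\ri n\omega}$ decay exponentially as $n \to +\infty$ while $\coth\sqrt{\ri n\omega} \to 1$; for $n \to -\infty$, $\csc\sqrt{\ri n\omega}$ and $\cot\sqrt{\ri n\omega}$ remain bounded because $\sqrt{\ri n\omega}$ has uniformly positive imaginary part (hence stays away from the real zeros of $\sin$). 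Combined with the $1/\sqrt{|n|\omega}$ prefactor in~\eqref{eqn:Heat.NeumanntoDirichlet.Dirichlet}, the Dirichlet coefficients $G_n^{(0)}, H_n^{(0)}$ inherit (and improve upon) the $\ell^2$-decay of $G_n^{(1)}, H_n^{(1)}$ coming from the regularity of $g_1, h_1$, making the decomposition $u = u_1 + u_2$ rigorous.
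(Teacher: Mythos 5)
Your proposal follows essentially the same route as the paper's proof: your Step (i) is the paper's Step 1 (solving the full-rank $2\times2$ systems obtained from the entireness of $q_{\pm n}$, with the $n=0$ constant fixed by the steady-state/mass-balance relation), and your Steps (ii)--(iii) are the paper's Step 2 (constructing $u_T$ so that $u_1$ is exactly $T$-periodic, then showing $u_2=u-u_1$ solves the homogeneous Neumann problem with zero-mean data and decays like $\re^{-\pi^2 t}$ together with $\partial_t u_2$). Your closing observations on the nonvanishing of $\sinh\sqrt{\ri n\omega}$ and $\sin\sqrt{\ri n\omega}$ and on the convergence of the series defining $u_T$ are details the paper leaves implicit, not a different approach.
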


\begin{proof}
This proposition has an elementary proof based on the analysis of the Fourier series solution representation, but we provide a different proof via the Fourier-tranformed equation~\eqref{Qeqn}.

We begin with a proof of equations~\eqref{eqn:Heat.NeumanntoDirichlet.Dirichlet} using equation~\eqref{Qeqnheat}  and assuming strong asymptotic periodicity of $u$.
Subsequently, we justify the latter assumption.

\smallskip
\noindent
{\bf Step 1}

    Under the assumptions of  proposition~\ref{prop:Heat.NeumanntoDirichlet} and assuming additionally that $u$ is strongly asymptotically $T$-periodic, equations~\eqref{eqn:Heat.qnnumerator.+n} yield the linear system
    \[
        \begin{pmatrix} -\sqrt{\ri n \omega} & \sqrt{\ri n \omega} \re^{\sqrt{\ri n\omega}} \\ \sqrt{\ri n \omega} & -\sqrt{\ri n \omega} \re^{-\sqrt{\ri n\omega}} \end{pmatrix}
        \begin{pmatrix} G^{(0)}_{n} \\ H^{(0)}_{n} \end{pmatrix}
        =
        \begin{pmatrix} - G_{n}^{(1)} + \re^{\sqrt{\ri n \omega}} H_{n}^{(1)} \\ - G_{n}^{(1)} + \re^{-\sqrt{\ri n \omega}} H_{n}^{(1)} \end{pmatrix}.
    \]
    This system is full rank for all $n\in\NN$, and formulae~\eqref{eqn:Heat.NeumanntoDirichlet.Dirichlet.+n} follow.
    Equations~\eqref{eqn:Heat.qnnumerator.-n} provide the full rank linear system
    \[
        \begin{pmatrix} \ri\sqrt{\ri n \omega} & -\ri\sqrt{\ri n \omega} \re^{-\ri\sqrt{\ri n\omega}} \\ -\ri\sqrt{\ri n \omega} & \ri\sqrt{\ri n \omega} \re^{\ri\sqrt{\ri n\omega}} \end{pmatrix}
        \begin{pmatrix} G^{(0)}_{-n} \\ H^{(0)}_{-n} \end{pmatrix}
        =
        \begin{pmatrix} - G_{-n}^{(1)} + \re^{-\ri\sqrt{\ri n \omega}} H_{-n}^{(1)} \\ - G_{-n}^{(1)} + \re^{\ri\sqrt{\ri n \omega}} H_{-n}^{(1)} \end{pmatrix}.
    \]
 Thus, formulae~\eqref{eqn:Heat.NeumanntoDirichlet.Dirichlet.-n} follow.
    The first of equations~\eqref{eqn:Heat.qnnumerator.n0} holds by assumption.
    The second of equations~\eqref{eqn:Heat.qnnumerator.n0} together with analysis of the steady state solution of the Neumann problem with $g_1(t)=G_0^{(1)}=H_0^{(1)}=h_1(t)$ for the heat equation yields the linear system
    \[
        \begin{pmatrix} \frac{1}{2} & \frac{1}{2} \\ -1 & 1 \end{pmatrix}
        \begin{pmatrix} G^{(0)}_{0} \\ H^{(0)}_{0} \end{pmatrix}
        =
        \begin{pmatrix} \int_0^1u_0(x)\D x \\ G_0^{(1)} \end{pmatrix}.
    \]
   Thus, equations~\eqref{eqn:Heat.NeumanntoDirichlet.Dirichlet.0} are found.
   Hence, no additional necessary conditions need to be satisfied  for periodicity to hold.

\smallskip
\noindent
{\bf Step 2}

    We now justify the assumption that $u$ is strongly asymptotically periodic.
    
    We begin by constructing  the specific problem of this form whose solution $u_1$ is not just strongly asymptotically periodic, but periodic.
    
    As with the case of the linear Schr\"odinger equation, suppose that there exists a solution $u_1(x,t)$ which is exactly $T$ periodic, with $T=2\pi/\omega$.
    Then, by the previous proof, formulae~\eqref{eqn:Heat.NeumanntoDirichlet.Dirichlet} hold, not only asymptotically, but exactly:
    \[
        u_1(0,t) = g_0(t) = \sum_{n\in\ZZ} G_n^{(0)} \re^{\ri n \omega t},
        \qquad
        u_1(1,t) = h_0(t) = \sum_{n\in\ZZ} H_n^{(0)} \re^{\ri n \omega t},
    \]
    in which $G_0^{(0)}$ is free and $H_0^{(0)}=G_0^{(0)}+H_0^{(1)}$.
    Therefore,
    \begin{align*}
        u_1(x,t) &= G_0^{(0)} + G_0^{(1)}x + \sum_{n\in\ZZ\setminus\{0\}} \re^{\ri n\omega t} U_n(x), \\
        U_n(x) &= \frac{G_n^{(1)}}{\sqrt{\ri n \omega}} \sinh(\sqrt{\ri n\omega} x) + \frac{H_n^{(1)}-\cosh(\sqrt{\ri n\omega})G_n^{(1)}}{\sqrt{\ri n\omega}\sinh(\sqrt{\ri n\omega})} \cosh(\sqrt{\ri n\omega} x).
    \end{align*}
    Both the heat equation and the boundary conditions are satisfied by such $u$, which is $T$-periodic, so the existence assumption was justified.
    The initial value of this problem is given by evaluating
    \[
        u(x,0) = G_0^{(0)} + G_0^{(1)}x + \sum_{n\in\ZZ\setminus\{0\}} U_n(x)=:u_T(x).
    \]
    Therefore, the problem with $T$-periodic Neumann boundary conditions and initial condition $u(x,0)=u_T(x)$ has the $T$-periodic solution $u_1(x,t)$.
    
    Now we consider the original problem with $T$-periodic Neumann boundary values and an arbitrary initial datum $u_0$, but do not assume strong asymptotic periodicity of $u$.
    We separate this problem into two.
    The first problem has the $T$-periodic Neumann boundary values but initial datum $u_T$, in which we choose $G_0^{(0)}$ to be defined by equation~\eqref{eqn:Heat.NeumanntoDirichlet.Dirichlet.0}.
    By the above argument, the solution $u_1$ of this problem is $T$-periodic.
    As the second problem, we prescribe  homogeneous Neumann boundary values and the initial condition $u(x,0)=u_0(x)-u_T(x)$. By construction, $\int_0^1 [u_0(x)-u_T(x)] \D x=0$.
    Separation of variables yields a solution $u_2$ decaying like $\exp(-\pi^2 t)$ to $0$, whose time derivative is decaying at the same rate to $0$.
    The principle of linear superposition implies that the solution of the problem with the initial condition $u_0(x)$ and the given periodic boundary conditions is given by $$
    u(x,t)=u_1(x,t)+u_2(x,t),\qquad  \lim_{t\to\infty}u(x,t)=u_1(x,t).
    $$
    Hence the original problem is strongly asymptotically periodic.
\end{proof}

\section{The Stokes equation} \label{sec:LKdV}

In this section, we study the Stokes equation~\eqref{lkdv} with time periodic boundary conditions. Assuming that  $u$ is strongly asymptotically periodic with period $T=2\pi/\omega$, equation~\eqref{eqn:qn} yields
\begin{equation} \label{qn.LKdV}
    q_n(k) = \frac{\left(k^2 G^{(0)}_{n} - \ri k G^{(1)}_{n} - G^{(2)}_{n}\right) - \re^{-\ri k}\left(k^2 H^{(0)}_{n} - \ri k H^{(1)}_{n} - H^{(2)}_{n}\right)}{\ri(n\omega-k^3)}.
\end{equation}
The asymptotic Fourier coefficients are given by equations~\eqref{eqn:AsymptoticBdryCoeffs}.
For each $n\in\NN$, define
\begin{equation} \label{eqn:LKdV.kn}
    k_n:= \sqrt[3]{n\omega}, \qquad k_{-n}:=-\sqrt[3]{n\omega},
\end{equation}
in which $\sqrt[3]\argdot$ represents the real cube root function.
For each $n\in\ZZ\setminus\{0\}$, $q_{n}$ is entire, so system~\eqref{eqn:qnnumerator.n} becomes
\begin{subequations}
\label{eqn:LKdV.qnnumerator}
\begin{gather}
    \label{eqn:LKdV.qnnumerator.+-n}
    0 = \left(k^2 G^{(0)}_{n} - \ri k G^{(1)}_{n} - G^{(2)}_{n}\right) - \re^{-\ri k}\left(k^2 H^{(0)}_{n} - \ri k H^{(1)}_{n} - H^{(2)}_{n}\right), \\
    \notag
    \alpha=\exp(2\pi\ri/3),\qquad k = \alpha^j k_n, \qquad j \in\{0,1,2\}.
\end{gather}
Because $q_0$ is also entire, system~\eqref{eqn:qnnumerator.0}  holds, yielding the following equations:
\begin{equation} \label{eqn:LKdV.qnnumerator.n0}
    G_0^{(2)}=H_0^{(2)}, \qquad H_0^{(1)}-G_0^{(1)} = H_0^{(2)}, \qquad 2\left(H_0^{(0)}-G_0^{(0)}\right)=H_0^{(1)}-H_0^{(2)}.
\end{equation}
\end{subequations}

\subsection{Boundary conditions that do not couple the ends of the interval} \label{ssec:Airy.Uncoupled}
Assume that the given boundary conditions are
\begin{equation} \label{eqn:LKdV.Decoupled.BC}
    u(0,t) = g_0(t), \qquad\qquad u(1,t) = 0, \qquad\qquad u_x(1,t) = 0,
\end{equation}
where the boundary condition $g_0(t)$ is periodic, with period $T=\frac{2\pi}{\omega}$.
Under these boundary conditions, equation~\eqref{eqn:LKdV.qnnumerator.+-n} simplifies to
\[
    0=\left.\left[\left(k^2 G^{(0)}_{n} - \ri k G^{(1)}_{n}-G^{(2)}_{n}\right) +\re^{-\ri k} H^{(2)}_{n}\right]\right\rvert_{k=\alpha^jk_n}.
\]
This yields the system
\begin{equation} \label{eqn:LKdV.Decoupled.BdrySystem}
    \begin{pmatrix}
        1 & 1 & \re^{-\ri k_n} \\
        1 & \alpha  & \re^{-\ri\alpha k_n} \\
        1 & \alpha^2 & \re^{-\ri\alpha^2 k_n}
    \end{pmatrix}
    \begin{pmatrix}
        - G_n^{(2)} \\  -\ri k_nG_n^{(1)} \\ H_n^{(2)}
    \end{pmatrix}
    =
    G_n^{(0)} k_n^2
    \begin{pmatrix}
        1 \\ \alpha^2 \\ \alpha
    \end{pmatrix},
\end{equation}
in which $k_n$ are defined by equation~\eqref{eqn:LKdV.kn}.
This system has determinant $\Delta(k_n)$, given by
\begin{equation} \label{eqn:LKdV.Uncoupled.Delta}
    \Delta(k) = (\alpha^2-\alpha) \left(\re^{-\ri k}+\alpha\re^{-\ri\alpha k}+\alpha^2 \re^{-\ri\alpha^2 k}\right).
\end{equation}

\begin{eg}
    For any $\omega>0$, let the given boundary conditions be
    \[
        g_0(t)=\sin(\omega t), \quad h_0(t)=0, \quad h_1(t) = 0, \qquad t\geq0.
    \]
    In this case, the coefficients $H_n^{(0)}$ and $H_n^{(1)}$ vanish,  while
    \begin{equation} \label{Gn0}
        G_n^{(0)} = \begin{cases}
            0 & |n|\neq 1,
            \\
            -\frac i 2 & n=1,
            \\
            \frac i 2 & n=-1.
        \end{cases}
    \end{equation}
    The only non-zero boundary values are $G_{\pm 1}^{(1)}$, $G_{\pm 1}^{(2)}$ and $H_{\pm 1}^{(2)}$.
    They solve the systems
    \[
        \begin{pmatrix}
            1 & 1 & \re^{\mp\ri\sqrt[3]{\omega}} \\
            1 & \alpha  & \re^{\mp\ri\alpha\sqrt[3]{\omega}} \\
            1 & \alpha^2 & \re^{\mp\ri\alpha^2\sqrt[3]{\omega}}
            
        \end{pmatrix}
        \begin{pmatrix}
            - G_{\pm1}^{(2)} \\ \mp\ri\sqrt[3]{\omega}G_{\pm1}^{(1)} \\ H_{\pm1}^{(2)}
        \end{pmatrix}
        =
       \mp\frac{\ri\omega^{2/3}}{2} \begin{pmatrix}
            1 \\ \alpha^2 \\ \alpha
        \end{pmatrix}.
    \]
    Hence the quantities
    \begin{align*}
        G_{\pm1}^{(1)} &= \frac{\sqrt[3]{\omega}\left( \re^{\mp\ri \sqrt[3]{\omega}} + \alpha \re^{\mp\ri \alpha^2 \sqrt[3]{\omega}} + \alpha \re^{\mp\ri \alpha^2 \sqrt[3]{\omega}} \right)}{2\left( \re^{\mp\ri \sqrt[3]{\omega}} + \alpha \re^{\mp\ri \alpha \sqrt[3]{\omega}} + \alpha^2 \re^{\mp\ri \alpha^2 \sqrt[3]{\omega}} \right)},
        \\
        G_{\pm1}^{(2)} &= \frac{\mp\ri\omega^{2/3}}{2},
        \\
        H_{\pm1}^{(2)} &= \frac{\mp3\ri \omega^{2/3}}{2\left( \re^{\mp\ri \sqrt[3]{\omega}} + \alpha \re^{\mp\ri \alpha \sqrt[3]{\omega}} + \alpha^2 \re^{\mp\ri \alpha^2 \sqrt[3]{\omega}} \right)}
    \end{align*}
    are well defined.
    It follows that there is no contradiction with the assumption that $u(x,t)$ is strongly asymptotically time periodic, with period $T=2\pi/\omega$.
\end{eg}
We now show that this is generically the case, namely we will not need to satisfy additional necessary conditions for periodicity.
\begin{prop} \label{prop:LKdV.Decoupled}
    Suppose that $u$ satisfies the Stokes equation~\eqref{lkdv} with $T$-periodic boundary conditions~\eqref{eqn:LKdV.Decoupled.BC}, in which, for $\omega=2\pi/T$
    \[
        g_0(t) = \sum_{n\in\ZZ} G_n^{(0)} \re^{\ri n \omega t}.
    \]
    Then $u$ is strongly asymptotically $T$-periodic, and the remaining boundary values are given by
    \begin{gather*}
        u_x(0,t)
        = \sum_{n\in\ZZ} G_n^{(1)} \re^{\ri n \omega t} + o(1), \qquad
        u_{xx}(0,t)
        = \sum_{n\in\ZZ} G_n^{(2)} \re^{\ri n \omega t} + o(1), \\
        u_{xx}(1,t)
        = \sum_{n\in\ZZ} H_n^{(2)} \re^{\ri n \omega t} + o(1),
    \end{gather*}
    where, for $n\in\ZZ\setminus\{0\}$, $G_n^{(1)}$, $G_n^{(2)}$ and $H_n^{(2)}$ are the unique solutions of the system~\eqref{eqn:LKdV.Decoupled.BdrySystem} and $G_0^{(1)}$, $G_0^{(2)}$ and $H_0^{(2)}$ are the solutions of system~\eqref{eqn:LKdV.qnnumerator.n0}.
\end{prop}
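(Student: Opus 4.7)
The plan is to follow the two-step template established in the proof of Proposition~\ref{prop:Heat.NeumanntoDirichlet}, now adapted to the third-order setting. First, assuming that $u$ is strongly asymptotically $T$-periodic, I would read off the remaining boundary coefficients from the analyticity of each $q_n$. Analyticity of the $n\ne 0$ terms produces precisely the linear system~\eqref{eqn:LKdV.Decoupled.BdrySystem}, whose determinant is $\Delta(k_n)$ with $\Delta$ given by~\eqref{eqn:LKdV.Uncoupled.Delta}, while $n=0$ is handled by~\eqref{eqn:LKdV.qnnumerator.n0}. To invert the system I need $\Delta(k_n)\ne 0$ for every nonzero $n$, i.e.\ that $\Delta$ has no real zeros. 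The zeros of $\Delta$ coincide with the spectrum of the spatial operator $-\ri\partial_x^3$ on $(0,1)$ with the homogeneous version of the boundary conditions~\eqref{eqn:LKdV.Decoupled.BC}; writing $\Delta(k)/(\alpha^2-\alpha) = \re^{-\ri k} + \alpha \re^{-\ri\alpha k} + \alpha^2 \re^{-\ri\alpha^2 k}$ and using $\alpha=\re^{2\pi\ri/3}$ to split the three exponents into real and imaginary parts, a direct computation shows the resulting real-valued combination cannot vanish for $k\in\RR\setminus\{0\}$, so $\Delta(k_n)\ne 0$ and Cramer's rule gives the stated formulae.

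Second, I would build a specific initial datum $u_T$ for which the problem is \emph{exactly} $T$-periodic. Using the coefficients just obtained in~\eqref{qn.LKdV} and inverting the Fourier transform mode-by-mode, I obtain
\[
    u_1(x,t) = G_0^{(0)} + \text{(steady polynomial)} + \sum_{n\in\ZZ\setminus\{0\}} \re^{\ri n\omega t} U_n(x),
\]
where $U_n(x)$ is the unique linear combination of the three elementary spatial solutions $\re^{\ri\alpha^j k_n x}$ ($j=0,1,2$) whose evaluation at $x=0,1$ and whose $x$-derivative at $x=1$ match the prescribed Fourier modes of $g_0, 0, 0$. By construction $u_1$ satisfies the PDE~\eqref{lkdv} and the boundary conditions~\eqref{eqn:LKdV.Decoupled.BC}, and is $T$-periodic; set $u_T(x) := u_1(x,0)$.

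Third, I would treat the residual part. Writing $u=u_1+u_2$, the function $u_2$ satisfies~\eqref{lkdv} with homogeneous boundary conditions $u_2(0,t)=u_2(1,t)=\partial_x u_2(1,t)=0$ and initial datum $u_0-u_T$. Using the unified transform representation of~\cite{Pel2005a}, $u_2(x,t)$ is expressed as contour integrals involving $\re^{\ri kx+\ri k^3 t}/\Delta(k)$; deforming the contours from the real axis into the sectors where $\Re(\ri k^3)<0$, one picks up residues only at the zeros of $\Delta$, all of which lie strictly inside those same decay sectors (again by the Birkhoff-regular spectral analysis of the homogeneous BVP). Therefore $u_2(x,t)$ and $\partial_t u_2(x,t)$ decay exponentially as $t\to\infty$, which by linearity shows $u$ is strongly asymptotically $T$-periodic to $u_1$.

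The main obstacle is the spectral step common to parts one and three: one must prove that every zero of the transcendental function $\Delta(k)$ in~\eqref{eqn:LKdV.Uncoupled.Delta} lies strictly off the real axis and in fact inside a sector where $\Re(\ri k^3)<0$. Ruling out real zeros (needed for Cramer's rule) and establishing exponential decay (needed for the $u_2$ argument) are both encoded in this eigenvalue location, so a careful analysis of $\Delta$ — not a routine calculation despite its explicit form — is the heart of the proof.
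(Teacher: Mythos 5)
Your Steps 1 and 2 match the paper's proof in structure: analyticity of $q_n$ gives the system~\eqref{eqn:LKdV.Decoupled.BdrySystem}, whose invertibility requires $\Delta(k_n)\neq0$, and the exactly $T$-periodic solution $u_1$ with initial datum $u_T=\sum_n U_n$ is built mode by mode exactly as in the paper. (The paper does not attempt your ``direct computation'' that $\Delta$ has no nonzero real zeros; it cites the known distribution of the zeros of this exponential polynomial. You correctly flag this as a nontrivial point.)

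The genuine gap is in your Step 3. You assert that the homogeneous problem $u(0,t)=u(1,t)=u_x(1,t)=0$ is Birkhoff-regular and that deforming the unified-transform contours picks up only residues at zeros of $\Delta$ lying in the decay sectors, yielding exponential decay of $u_2$. Both claims fail. This boundary value problem is the classical example of a \emph{degenerate irregular} problem: the eigenfunctions of $L\phi=\phi'''$ with $\phi(0)=\phi(1)=\phi'(1)=0$ do not form a complete system (see the paper's Remark~\ref{rmk:WhyIntegralRep} and the references to Jackson and Hopkins therein), which is precisely why the integral representation must be used and why no eigenfunction-expansion or pure residue argument is available. Concretely, in the representation
\[
    2\pi u_2(x,t) = - \int_{\partial E^+} \re^{\ri k x + \ri k^3t} Z^+(k) \D k - \int_{\partial E^-} \re^{\ri k (x-1) + \ri k^3t} Z^-(k) \D k,
\]
the $\partial E^-$ integral can indeed be collapsed onto the zeros of $\Delta$ by Jordan's lemma and decays exponentially, but $Z^+(k)$ does not decay as $k\to\infty$ to the left of $\partial E^+$, so that contour cannot be deformed and no residue argument applies to it. The paper instead treats the $\partial E^+$ integral as a Fourier integral by the method of stationary phase at the degenerate critical point $k=0$, finding a vanishing leading term and hence $u_2(x,t)=\bigoh{t^{-2/3}}$, $\partial_t u_2(x,t)=\bigoh{t^{-4/3}}$ --- algebraic, not exponential, decay. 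The final conclusion (strong asymptotic $T$-periodicity) survives because algebraic decay suffices, but your stated justification for the decay of $u_2$ is not valid and must be replaced by the stationary-phase analysis.
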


\begin{proof}
First, we use the  analysis of the spectral equation~\eqref{Qeqnstokes} to show that, under the assumption of strong asymptotic periodicity of $u$, the unspecified boundary values can be found effectively.
Subsequently, we prove strong asymptotic $T$-periodicity of $u$.

\smallskip
\noindent
{\bf Step 1:}

    Under the assumption that the solution of the problem is strongly asymptotically periodic, the Fourier coefficients of the boundary values satisfy the systems~\eqref{eqn:LKdV.Decoupled.BdrySystem} and~\eqref{eqn:LKdV.qnnumerator.n0}.
    The latter system is full rank for the boundary conditions~\eqref{eqn:LKdV.Decoupled.BC}.
    
    It is known~\cite[\S A]{Pel2005a} that there are countably many complex zeros of $\Delta$.
    The asymptotic distribution of these zeros is well understood and, in particular, they never coincide with the cube roots of nonzero real numbers \cite{Lan1929a}.
    Hence, for all $n\in\ZZ\setminus\{0\}$, $\Delta(k_n)\neq0$ implying that the corresponding system is full rank.
    It is therefore always possible to solve for well-defined unknown boundary values in terms of the given ones.

\smallskip
\noindent
{\bf Step 2:}

    We now justify the assumption that the solution is  asymptotically periodic.

    Define $U_0$ by
    \[
        U_0(x) = G_0^{(0)}(x^2-2x+1).
    \]
    For $n\in\ZZ\setminus\{0\}$, define $U_n$ by
    \begin{equation*}
        U_n(x) = \frac{1}{2\pi}\int_{-\infty}^\infty \re^{\ri kx} q_n(k) \D k, \qquad
        q_n(k) = \frac{\left(k^2 G^{(0)}_{n} - \ri k G^{(1)}_{n} - G^{(2)}_{n}\right) + \re^{-\ri k} H^{(2)}_{n}}{\ri(n\omega-k^3)},
    \end{equation*}
    in which $G_n^{(2)}$, $G_n^{(1)}$ and $H_n^{(2)}$ satisfy the system~\eqref{eqn:LKdV.Decoupled.BdrySystem}.
    Then, by construction, for each $n\in\ZZ$,
    \[
        \left[\ri n\omega + \frac{\D^3}{\D x^3}\right] U_n(x) = 0, \qquad U_n(0)=G_n^{(0)}, \qquad U_n(1) = 0, \qquad U'_n(1) = 0.
    \]
    Therefore, the function $u_1(x,t)$ given by
    \[
        u_1(x,t):=\sum_{n\in\ZZ}\re^{\ri n \omega t} U_n(x)
    \]
    is a $T$-periodic function satisfying the Stokes equation~\eqref{lkdv} and the boundary conditions~\eqref{eqn:LKdV.Decoupled.BC}.
    The initial value of $u_1(x,t)$ is
    \[
        u_T(x):= \sum_{n\in\ZZ} U_n(x).
    \]
    
    Now suppose $u(x,t)$ satisfies the Stokes equation~\eqref{lkdv}, the boundary conditions~\eqref{eqn:LKdV.Decoupled.BC}, and the initial condition $u(x,0)=u_0(x)$.
    Then $u=u_1+u_2$ where $u_1$ is the function defined above and $u_2$ satisfies the Stokes equation~\eqref{lkdv} with  homogeneous boundary conditions,
    \[
        u(0,t) = 0, \qquad\qquad u(1,t) = 0, \qquad\qquad u_x(1,t) = 0,
    \]
    and the initial condition $u_2(x,0)=u_0(x)-u_T(x)$.
    Then, by~\cite[theorem~4.2]{Pel2005a} and~\cite[theorem~1.3]{Smi2012a}, the solution $u_2$ satisfies~
    \begin{align*}
        2\pi u_2(x,t) &= \int_{-\infty}^\infty \re^{\ri k x + \ri k^3t}(\hat u_0(k)-\hat{u}_T(k)) \D k + \int_{\partial D^+} \re^{\ri k x + \ri k^3t} Z^+(k) \D k \\
        &\hspace{19em} + \int_{\partial D^-} \re^{\ri k (x-1) + \ri k^3t} Z^-(k) \D k \\
        &= - \int_{\partial E^+} \re^{\ri k x + \ri k^3t} Z^+(k) \D k - \int_{\partial E^-} \re^{\ri k (x-1) + \ri k^3t} Z^-(k) \D k,
    \end{align*}
    where
    \begin{gather*}
    \begin{aligned}
        Z^+(k) &=
        \frac{-\hat{w}_0(k)\left(\alpha^2\re^{-\ri\alpha^2k}+\alpha\re^{-\ri\alpha k}\right) + \left(\alpha\hat{w}_0(\alpha k)+\alpha^2\hat{w}_0(\alpha^2 k)\right)\re^{-\ri k}}{\re^{-\ri k}+\alpha\re^{-\ri\alpha k}+\alpha^2 \re^{-\ri\alpha^2 k}}, \\
        Z^-(k) &= \frac{\hat{w}_0(k) + \alpha\hat{w}_0(\alpha k) + \alpha^2\hat{w}_0(\alpha^2 k)}{\re^{-\ri k}+\alpha\re^{-\ri\alpha k}+\alpha^2 \re^{-\ri\alpha^2 k}},\qquad \hat w_0:=\hat u_0-\hat{u}_T,
    \end{aligned} \\
        D^\pm = \CC^\pm\cap\{k:\Re(-\ri k^3)<0\}, \qquad E^\pm = \CC^\pm\cap\{k:\Re(-\ri k^3)>0\},
    \end{gather*}
    and the boundaries are positively oriented.
    
    Fix $\epsilon>0$ small enough to be less than half the infimal separation of the zeros of $\Delta$.
    Then, for all $t\geq0$,
    \[
        \re^{\ri k^3t}Z^-(k) = \bigoh{k^{-1}} \mbox{ as } k\to \infty \mbox{ from within } \clos\left( E^- \setminus \bigcup_{\substack{\lambda:\abs{\la}>0,\\\Delta(\la)=0}} B(\la,\epsilon) \right).
    \]
    uniformly in $\arg(\la)$.
    Hence, by Jordan's lemma,
    \begin{equation*}
        - \int_{\partial E^-} \re^{\ri k (x-1) + \ri k^3t} Z^-(k) \D k
        = - \sum_{\substack{\lambda:\abs{\la}>0,\\\Delta(\la)=0}} \int_{C(\la,\epsilon)} \re^{\ri k (x-1) + \ri k^3t} Z^-(k) \D k
        = \bigoh{\re^{\ri\la_0^3t}},
    \end{equation*}
    for $\la_0$ the smallest nonzero zero of $\Delta$ lying on the negative imaginary axis; a numerical root finder based on the argument principle reveals $\ri\la_0^3<-64$.
    
    The integrand in the integral about $\partial E^+$ does not exhibit such decay to the left of the contour of integration, so no such contour deformation argument may be applied.
    Moreover, it may not be deformed (outside an arbitrarily large finite region) away from $\partial E^+$, which is not a descent contour, so the method of steepest descent is not applicable.
    Rather, it is a Fourier integral, which may be treated using the method of stationary phase to determine the contribution from the degenerate critical point at $k=0$.
    Applying the method presented in~\cite[\S2.9]{Erd1956a}, we find that, for all $x$, the leading order contribution is $0$:
    \[
        \int_{\partial E^+}\re^{\ri kx+\ri k^3t} Z^+(k)\D k = 0t^{-1/3} + \bigoh{t^{-2/3}}.
    \]
    Therefore, $u_2(x,t)=\bigoh{t^{-2/3}}$.
    Similarly, $\partial_t u_2(x,t) = \bigoh{t^{-4/3}}$.
    
    Because $u_1$ is periodic and $u_2$ and its temporal derivative both decay, $u$ is strongly asymptotically periodic.
\end{proof}

\begin{rmk} \label{rmk:WhyIntegralRep}
    In the above proof of proposition~\ref{prop:LKdV.Decoupled}, we employed the integral representation of the function $u_2$ obtained via the unified transform method, and performed a stationary phase analysis at a degenerate critical point.
    In the corresponding parts of our proofs of the analogous propositions~\ref{prop:Heat.NeumanntoDirichlet} and~\ref{prop:LKdV.Coupled}, we used series representations, which admit simpler asymptotic analysis.
    The reason for the integral representation of $u_2$ in the proof of proposition~\ref{prop:LKdV.Coupled} is that no spectral series representation of the solution exists.
    Indeed, the eigenfunctions of the differential operator
    \[
        L\phi=\phi''' \qquad \Dom(L) = \{\phi:\phi(0)=0,\,\phi(1)=0,\,\phi'(1)=0\}
    \]
    do not form a complete system, see~\cite{Jac1915a,Hop1919a, Pel2005a}.
\end{rmk}

\subsection{Boundary conditions that couple the ends of the interval} \label{ssec:LKdV.Coupled}
We now consider another class of boundary conditions, that couple the two ends of the interval.

Namely, for real $\beta$, let the given boundary conditions be
\begin{equation} \label{eqn:LKdV.Coupled.BC}
    u(0,t) = g_0(t) = \sum_{k\in\ZZ} G_n^{(0)} \re^{\ri n \omega t}, \qquad u(1,t)=0, \qquad u_x(0,t) = \beta u_x(1,t),\quad \abs\beta\geq1,
\end{equation}
with $\omega=2\pi/T$.

Assuming that $u$ is strongly asymptotically $T$-periodic, system~\eqref{eqn:LKdV.qnnumerator.+-n} simplifies to
\[
    0 = \left.\left[k^2 G^{(0)}_{n} - G^{(2)}_{n} - \ri k\left(\beta-\re^{-\ri k}\right) H^{(1)}_{n} + \re^{-\ri k} H^{(2)}_{n}\right]\right\rvert_{k=\alpha^jk_n}, \qquad\qquad G^{(1)}_{n} = \beta H^{(1)}_{n}.
\]
Therefore, the system that characterises the unknown boundary values is, for $n\in\ZZ\setminus\{0\}$,
\begin{equation} \label{eqn:LKdV.Coupled.BdrySystem}
    \begin{pmatrix}
        1 &         \left(\beta-\re^{-\ri         k_n}\right) & \re^{-\ri k_n} \\
        1 & \alpha  \left(\beta-\re^{-\ri \alpha  k_n}\right) & \re^{-\ri\alpha k_n} \\
        1 & \alpha^2\left(\beta-\re^{-\ri \alpha^2k_n}\right) & \re^{-\ri\alpha^2 k_n}
    \end{pmatrix}
    \begin{pmatrix}
        - G_n^{(2)} \\  -\ri k_nH_n^{(1)} \\ H_n^{(2)}
    \end{pmatrix}
    =
    G_n^{(0)} k_n^2
    \begin{pmatrix}
        1 \\ \alpha^2 \\ \alpha
    \end{pmatrix}, \qquad G^{(1)}_{n} = \beta H^{(1)}_{n},
\end{equation}
where $k_n$ are defined by equation~\eqref{eqn:LKdV.kn}.
The determinant of this system is $\Delta(k_n)$, with $\Delta$ given by
\begin{equation} \label{eqn:LKdV.Coupled.Delta}
    \Delta(k) = (\alpha^2-\alpha) \sum_{j=0}^2 \alpha^j \left( \re^{\ri\alpha^jk} + \beta \re^{-\ri\alpha^j k} \right).
\end{equation}
In the arguments below, it will be useful to use the following formula:
\begin{multline} \label{eqn:LKdV.Coupled.Delta2}
    \Delta(k) = (\beta-1)\sqrt{3}\left( - \sin(k) + \sin\left(\frac{k}{2}+\frac{2\pi}{3}\right)\re^{\sqrt{3}k/2} + \sin\left(\frac{k}{2}-\frac{2\pi}{3}\right)\re^{-\sqrt{3}k/2} \right) \\
    - \ri(\beta+1)\sqrt{3}\left( \cos(k) - \sin\left(\frac{\pi}{6}+\frac{k}{2}\right)\re^{\sqrt{3}k/2} - \sin\left(\frac{\pi}{6}-\frac{k}{2}\right)\re^{-\sqrt{3}k/2} \right).
\end{multline}

\begin{rmk}
    We only consider the case that $\abs\beta\geq1$ because, when $\abs\beta<1$, the solution blows up instantaneously for generic initial data.
    Indeed, if $0<\beta<1$, then an analysis using the arguments of~\cite{Lan1931a} yields, for all $m\in\ZZ$,
    \[
        \lambda_m=(6\abs{m}-1)\pi/3 + \log(\abs\beta) + \bigoh{\re^{-\abs{m}\pi\sqrt3}}.
    \]
    The time evolution of the corresponding eigenfunctions is governed by $\exp(\ri\lambda_m^3t)$, and the real part of $\ri\lambda_m^3$ is unbounded above because $\log(\abs\beta)<0$.
    It follows that for any $t>0$ the solution is unbouded.
    A similar analysis applies for $-1<\beta<0$, but the real parts of $\lambda_m$ are shifted by $\pi$.
    The case $\beta=0$ is, after change of variables $x\mapsto1-x$, the time reversal of $\beta=\infty$, which was studied in~\S\ref{ssec:Airy.Uncoupled}.
    A similar argument shows that the $\beta=0$ problem is also ill-posed.
\end{rmk}

\begin{prop} \label{prop:LKdV.Coupled}
    Suppose that $u$ satisfies the Stokes equation~\eqref{lkdv} with the $T$-periodic boundary conditions~\eqref{eqn:LKdV.Decoupled.BC}, where the real number $\beta$ obeys $\abs\beta>1$.
    Then $u$ is strongly asymptotically $T$-periodic.  Its unspecified boundary values satisfy the following estimates:
    \begin{gather*}
        u_x(0,t)
        = \sum_{n\in\ZZ} G_n^{(1)} \re^{\ri n \omega t} + o(1),
        \qquad
        u_{xx}(0,t)
        = \sum_{n\in\ZZ} G_n^{(2)} \re^{\ri n \omega t} + o(1), \\
        u_x(1,t)
        = \sum_{n\in\ZZ} H_n^{(1)} \re^{\ri n \omega t} + o(1),
        \qquad
        u_{xx}(1,t)
        = \sum_{n\in\ZZ} H_n^{(2)} \re^{\ri n \omega t} + o(1),
    \end{gather*}
    where, for $n\in\ZZ\setminus\{0\}$, $G_n^{(1)}$, $G_n^{(2)}$, $H_n^{(1)}$ and $H_n^{(2)}$ are the solutions of the full rank system~\eqref{eqn:LKdV.Coupled.BdrySystem}, and $G_0^{(1)}$, $G_0^{(2)}$, $H_0^{(1)}$ and $H_0^{(2)}$ are the simultaneous solutions of the system~\eqref{eqn:LKdV.qnnumerator.n0} and $G_0^{(1)}=\beta H_0^{(1)}$.
\end{prop}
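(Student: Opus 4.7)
The plan is to follow the two-step template of Propositions~\ref{prop:Heat.NeumanntoDirichlet} and~\ref{prop:LKdV.Decoupled}. First, assuming strong asymptotic $T$-periodicity, I would show that the Dirichlet-to-Neumann system~\eqref{eqn:LKdV.Coupled.BdrySystem} is uniquely solvable at each $n\in\ZZ\setminus\{0\}$, and that the $n=0$ system obtained by combining~\eqref{eqn:LKdV.qnnumerator.n0} with the extra constraint $G_0^{(1)}=\beta H_0^{(1)}$ also has a unique solution. Then, using these Fourier coefficients, I would construct an exactly $T$-periodic solution $u_1$ of the PDE with the prescribed boundary conditions, split $u=u_1+u_2$ so that $u_2$ solves the homogeneous analogue of~\eqref{eqn:LKdV.Coupled.BC} with initial datum $u_0-u_T$ (where $u_T(x):=u_1(x,0)$), and finally show that $u_2$ and $\partial_t u_2$ decay to $0$.

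For Step~1, the $n=0$ case is a direct $4\times 4$ linear algebra check. For $n\neq 0$ the issue is $\Delta(k_n)\neq 0$, where $k_n\in\RR\setminus\{0\}$ is a nonzero real cube root of $n\omega$. Using the real-friendly reformulation~\eqref{eqn:LKdV.Coupled.Delta2}, one observes that for real $k$ the function $\Delta$ decomposes into a $(\beta-1)$-weighted and a $(\beta+1)$-weighted part. For $|k|$ large the two terms growing like $\re^{\sqrt{3}k/2}$ (respectively $\re^{-\sqrt{3}k/2}$) dominate, and the strict inequality $|\beta|>1$ forces a strict magnitude mismatch between the coefficients that multiply the fast-growing exponential, ruling out cancellation; a direct sign analysis of the real and imaginary parts of~\eqref{eqn:LKdV.Coupled.Delta2} on the bounded part of $\RR$ handles the remaining compact region.

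For Step~2, having solved the linear systems, define $U_n$ for each $n\in\ZZ\setminus\{0\}$ as the unique solution of the ODE $\ri n\omega U_n+U_n'''=0$ on $[0,1]$ with the boundary conditions $U_n(0)=G_n^{(0)}$, $U_n(1)=0$, $U_n'(0)=\beta U_n'(1)$, and let $U_0$ be the compatible quadratic polynomial from the $n=0$ system. Setting $u_1(x,t):=\sum_{n\in\ZZ}\re^{\ri n\omega t}U_n(x)$ then produces a $T$-periodic solution of~\eqref{lkdv},~\eqref{eqn:LKdV.Coupled.BC}, whose initial value is $u_T$. For the residual $u_2$, I would invoke the fact recorded in remark~\ref{rmk:WhyIntegralRep} that for $|\beta|>1$ the spatial operator $L\phi=\phi'''$ with the homogeneous boundary conditions admits a complete system of eigenfunctions $\phi_m$, so that $u_2$ has a convergent spectral series $\sum_m c_m\re^{\ri\lambda_m^3t}\phi_m(x)$. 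A Langer-type analysis of the zeros of $\Delta$, mirroring the one sketched in the remark preceding the proposition but with $\log|\beta|>0$, locates the $\lambda_m$ so that $\Im(\lambda_m^3)$ is bounded below away from zero, giving exponential decay of both $u_2$ and $\partial_t u_2$ and hence strong asymptotic $T$-periodicity of $u$.

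I expect the main obstacle to be the unconditional verification in Step~1 that $\Delta$ has no nonzero real root for \emph{every} real $\beta$ with $|\beta|>1$: the large-$|k|$ dominance argument is routine, but the compact part of $\RR$ requires a careful finite computation (e.g.\ monotonicity of the relevant real/imaginary parts extracted from~\eqref{eqn:LKdV.Coupled.Delta2}, or an argument-principle count). A secondary obstacle is making the Langer-type eigenvalue asymptotics and the resulting decay rate sufficiently explicit that the convergence of $\partial_t u_2$ (not just $u_2$) to $0$ is justified rigorously.
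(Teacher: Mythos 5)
Your proposal follows essentially the same route as the paper's proof: the same two-step template, the same use of the real/imaginary splitting~\eqref{eqn:LKdV.Coupled.Delta2} to exclude nonzero real roots of $\Delta$ (large-$|k|$ dominance of the $\re^{\sqrt{3}k/2}$ term plus a finite check on a compact set), the same construction of the exactly $T$-periodic $u_1$ from the $U_n$, and the same appeal to regularity of the coupled operator (Locker) to get a biorthogonal eigenfunction series for $u_2$ with exponentially decaying modes located via a Langer-type analysis of the zeros of $\Delta$. The two obstacles you flag are precisely the points the paper itself settles only numerically (nonvanishing of $\Delta$ on $0<k<10/\sqrt{3}$, and the absence of the finitely many possible zeros of $\Delta$ in $\clos(D)$, the latter acknowledged as unproved in the remark following the proof).
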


\begin{proof}
As before, we begin by using the analysis of equation~\eqref{Qeqnstokes} to determine the unknown boundary values under the assumption of strong asymptotic periodicity of $u$.
Then, we give the full proof.

\smallskip
\noindent
{\bf Step 1:}

Assuming that the solution is asymptotically periodic, the Fourier coefficients of the boundary values satisfy systems~\eqref{eqn:LKdV.Coupled.BdrySystem} and~\eqref{eqn:LKdV.qnnumerator.n0}.  For the decoupled boundary conditions~\eqref{eqn:LKdV.Decoupled.BC}, the
  system with $n=0$ reduces to
    \[
        \begin{pmatrix}
            1 & 0 & -1 \\
            0 & 1-\beta & -1 \\
            0 & 1 & -1
        \end{pmatrix}
        \begin{pmatrix} G_0^{(2)} \\ H_0^{(1)} \\ H_0^{(2)} \end{pmatrix}
        =
        \begin{pmatrix} 0 \\ 0 \\ -2G_0^{(0)} \end{pmatrix},
        \qquad
        G_0^{(1)} = \beta H_0^{(1)},
    \]
and  is of full rank.
    
    Suppose $\beta>1$  (the analysis for $\beta<-1$ is very similar and is omitted).
    Then, expanding formula~\eqref{eqn:LKdV.Coupled.Delta} into real and imaginary parts for $k\in\RR$, we obtain formula~\eqref{eqn:LKdV.Coupled.Delta2}.
    It can be determined numerically that $\Delta(k)\neq0$ for $0<k<10/\sqrt{3}$.
    If $k\geq10/\sqrt{3}$, then, for the real part of $\Delta(k)$ to vanish, it must be that
    \[
        \abs{\sin\left(\frac{k}{2}+\frac{2\pi}{3}\right)}<2\re^{-5} \qquad\Rightarrow\qquad k = 2m\pi - \frac{4\pi}{3}+\epsilon,\; \exists\, \epsilon<\frac{1}{24}.
    \]
    But then
    \begin{align*}
        \abs{\Im\Delta(k)} &= (\beta+1)\sqrt{3}\left\lvert \cos\left(\frac{4\pi}{3}-\epsilon\right) - \sin\left(m\pi-\frac{\pi}{2}+\frac{\epsilon}{2}\right)\re^{\sqrt{3}k/2} \right. \\
        &\hspace{15em} \left. - \sin\left(-m\pi-\frac{5\pi}{6}-\frac{\epsilon}{2}\right)\re^{-\sqrt{3}k/2} \right\rvert \\
        &\geq (\beta+1)\sqrt{3}\left( \frac{47\re^{5}}{48} - 2 \right) > 0.
    \end{align*}
    Therefore, there are no positive real zeros of $\Delta$.
    By equation~\eqref{eqn:LKdV.Coupled.Delta}, $\Delta$ satisfies the symmetry condition $\Delta(-k)=-\overline{\Delta(k)}$, where the bar denotes complex conjugation.
    It follows that there are no negative real zeros of $\Delta$ either.
    Because there are no nonzero real zeros of $\Delta$, the system~\eqref{eqn:LKdV.Coupled.BdrySystem} is full rank and hence admits a unique solution, regardless of $n\in\ZZ\setminus\{0\}$ and $\omega>0$.

\smallskip
\noindent
{\bf Step 2:}
    We now justify the assumption made above that the solution is asymptotically periodic.
    
    Define $U_0$ by
    \[
        U_0(x) = G_0^{(0)}(-x+1).
    \]
   For $n\in\ZZ\setminus\{0\}$, define $U_n$ by
    \begin{equation*}
        U_n(x) = \frac{1}{2\pi}\int_{-\infty}^\infty \re^{\ri kx} q_n(k) \D k, \qquad
        q_n(k) = \frac{k^2 G^{(0)}_{n} - G^{(2)}_{n} - \ri k\left(\beta-\re^{-\ri k}\right) H^{(1)}_{n} + \re^{-\ri k} H^{(2)}_{n}}{\ri(n\omega-k^3)},
    \end{equation*}
    in which $G_n^{(2)}$, $G_n^{(1)}$ and $H_n^{(2)}$ satisfy the system~\eqref{eqn:LKdV.Coupled.BdrySystem}.
    Then, by construction, for each $n\in\ZZ$ we have
    \[
        \left[\ri n\omega + \frac{\D^3}{\D x^3}\right] U_n(x) = 0, \qquad U_n(0)=G_n^{(0)}, \qquad U_n(1) = 0, \qquad U'_n(0) = \beta U'_n(1).
    \]
    Therefore,
    \[
        u_1(x,t):=\sum_{n\in\ZZ}\re^{\ri n \omega t} U_n(x)
    \]
    is a $T$-periodic function satisfying the Stokes equation~\eqref{lkdv} with the boundary conditions~\eqref{eqn:LKdV.Coupled.BC}.
    The initial value of $u_1$ is
    \[
        u_T(x):= \sum_{n\in\ZZ} U_n(x).
    \]
    
    As in the previous case, suppose $u(x,t)$ satisfies the Stokes equation~\eqref{lkdv}, the boundary conditions~\eqref{eqn:LKdV.Coupled.BC}, and the initial condition $u(x,0)=u_0(x)$.
    Then $u=u_1+u_2$ where $u_1$ is the function defined above and $u_2$ satisfies the Stokes equation~\eqref{lkdv}, with the homogeneous boundary conditions
    \[
        u(0,t) = 0, \qquad\qquad u(1,t) = 0, \qquad\qquad u_x(0,t) = \beta u_x(1,t),
    \]
    and the initial condition $u_0(x)-u_T(x)$.
    
    The differential operator $L$, defined by
    \[
        L\phi = (-\ri)^3\phi''', \qquad \Dom(L) = \{\phi\in \AC^2[0,1]: \phi(0)=0,\,\phi(1)=0,\, \phi'(0)=\beta\phi'(1)\},
    \]
    is regular in the sense of~\cite{Loc2000a}. Thus, $L$ and its Lagrange adjoint
    \[
        L^\star\psi = (-\ri)^3\phi''', \qquad \Dom(L^\star) = \{\psi\in \AC^2[0,1]: \psi(0)=0,\,\psi(1)=0,\, \beta\psi'(0)=\psi'(1)\}
    \]
    have the property that their eigenfunctions $(E_m,E_m^\star)_{m\in\NN}$, with corresponding eigenvalues $\la_m^3$ satisfying the equations
    \begin{equation} \label{eqn:LKdV.Coupled.Eigs}
        LE_m=\la_m^3E_m, \qquad L^\star E_m^\star = \overline{\la_m^3} E_m^\star,
    \end{equation}
    form a biorthogonal basis.
    Therefore,
    \begin{equation} \label{eqn:LKdV.Coupled.SeriesSoln}
        u_2(x,t) = \sum_{m\in\NN} \left\langle u_0-u_T , E_m^\star \right\rangle \re^{-\ri \lambda_m^3t} E_m(x).
    \end{equation}
    Therefore, the asymptotic time analysis of $u_2(x,t)$ is reduced to the study of the  time behaviour of the functions $\exp(-\ri\lambda_m^3t)$.
    
    By the first of equations~\eqref{eqn:LKdV.Coupled.Eigs}, for some $A_m,B_m,C_m\in\CC$, we have
    \[
        E_m(x) = \begin{cases}A_mx^2+B_mx+C_m & \la_m=0, \\A_m \re^{\ri\la_m x} + B_m \re^{\ri\alpha\la_m x} + C_m \re^{\ri\alpha^2\la_m x} & \lambda_m\neq0.\end{cases}
    \]
    But if $\la_m=0$, then $E_m\in\Dom(L)$ implies $E_m=0$, so we discount the trivial eigenvalue.
    Because $E_m\in\Dom(L)$, for $\la_m\neq0$, we find
    \[
        \begin{pmatrix}
            1 & 1 & 1 \\
            \re^{\ri\la_m} & \re^{\ri\alpha\la_m} & \re^{\ri\alpha^2\la_m} \\
            \ri\la_m\left(1-\beta\re^{\ri\la_m}\right) & \ri\alpha\la_m\left(1-\beta\re^{\ri\alpha\la_m}\right) & \ri\alpha^2\la_m\left(1-\beta\re^{\ri\alpha^2\la_m}\right)
        \end{pmatrix}
        \begin{pmatrix} A_m \\ B_m \\ C_m \end{pmatrix}
        =
        \begin{pmatrix} 0 \\ 0 \\ 0 \end{pmatrix}.
    \]
    Row and column operations reveal that this system has nontrivial solutions if and only if $\Delta(\la_m)=0$, in which $\Delta$ is the determinant of system~\eqref{eqn:LKdV.Coupled.BdrySystem}.
    We have shown that the nonzero zeros of $\Delta$ are the cube roots of the eigenvalues of the spatial differential operator $L$.
    
    An asymptotic analysis using the arguments of~\cite{Lan1931a} shows that the exponential polynomial $\Delta$ has zeros asymptotically distributed on the lines
    \[
        \alpha^j\left( \ri\log\abs\beta + \RR \right),
    \]
    so there can be at most finitely many zeros of $\Delta$ in the closure of
    \[
        D:=\{k:\Re(-\ri k^3)<0\}.
    \]
    
    Therefore, the nonzero zeros of $\Delta$ have a finite infimal separation from $\clos(D)$.
    It follows that $\sup\{\Re(-\ri \la_m^3)\}<0$, so the terms in the series~\eqref{eqn:LKdV.Coupled.SeriesSoln} decay uniformly exponentially in time.
    Differentiating the series in~\eqref{eqn:LKdV.Coupled.SeriesSoln} term by term, we see that $\partial_t u_2(x,t)$ also decays exponentially in time.
    
   Since $u=u_1+u_2$ and $u_1$ is $T$-periodic whereas $u_2$ is uniformly exponentially decaying in time, it follows that $u$ is strongly asymptotically $T$-periodic.
\end{proof}

\begin{rmk}
    In the above proof, we have assumed that there no nonzero zeros of $\Delta$ in $\clos(D)$.
    While this appears to be the case numerically (see figure~\ref{fig:zerosDelta}), we have not given a rigorous proof.
    Therefore we have implicitly excluded form our consideration any case that may have finitely many zeros within $\clos(D)$.
\end{rmk}

\begin{figure}
    \centering\includegraphics[width=0.7\linewidth]{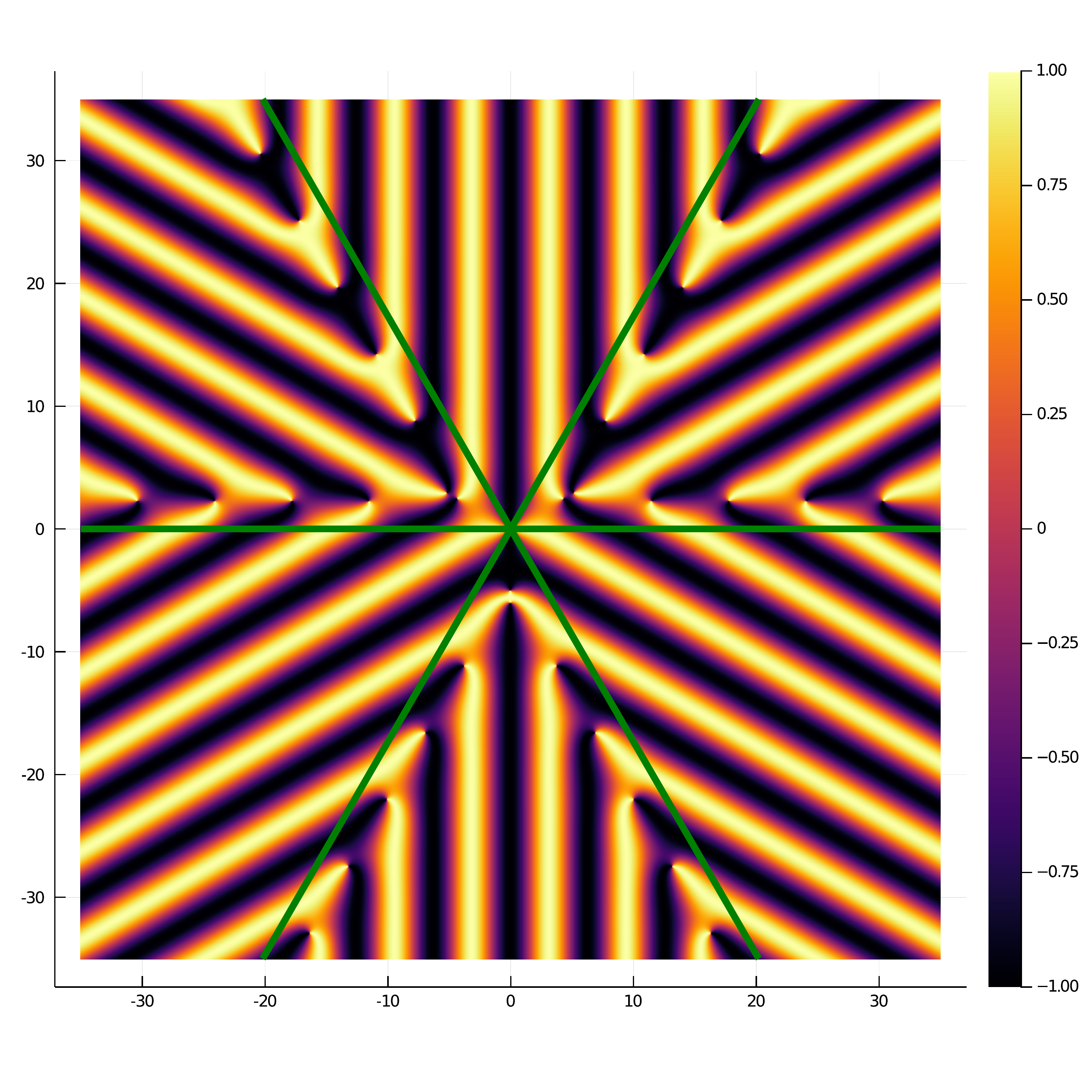}
    \caption{
        A plot showing the zeros of $\Delta$ in the case $\beta=10$.
        The plot is a heatmap of $\sin(\arg(\Delta(x+\ri y)))$.
        Because $\Delta$ is entire, the only points of discontinuity of this heat map are zeros of $\Delta$.
    }
    \label{fig:zerosDelta}
\end{figure}

Finally, we consider the special case when $\beta=\pm 1$, when, in contrast to the case just considered, the eigenvalues of the space operator are real.
Proposition~\ref{prop:LKdV.CoupledBeta1} should be compared and contrasted with proposition~\ref{prop:NewDJT2}.
The irregular (albeit asymptotically regular) separation of $\la_m$ in the Stokes case necessitates a different characterisation of the necessary condition for strong asymptotic periodicity.
Whereas in proposition~\ref{prop:NewDJT2} we were able to give a commensurability characterisation of the periodicity criterion, in proposition~\ref{prop:LKdV.CoupledBeta1} we cannot simplify beyond $\Delta(k_n)\neq0$.
Adapting proposition~\ref{prop:NewDJT2} to Robin boundary conditions such as $u(0,t)=g_0(t)$, $u_x(1,t)+bu(1,t)=h_0(t)$ would have a similar effect.
\begin{prop} \label{prop:LKdV.CoupledBeta1}
    Suppose that $u$ satisfies the Stokes equation~\eqref{lkdv} with the $T$-periodic boundary conditions~\eqref{eqn:LKdV.Coupled.BC} and $\beta=\pm1$:
    $$
     u(0,t) = g_0(t) = \sum_{n\in\ZZ} G_n^{(0)} \re^{\ri n \omega t}, \qquad\qquad u(1,t)=0, \qquad\qquad u_x(0,t) = \pm u_x(1,t).
     $$
     Let $k_n$ be defined by \eqref{eqn:LKdV.kn} and let $\Delta(k)$ be defined by \eqref{eqn:LKdV.Coupled.Delta2}.
     
   A necessary condition for the system to admit a well-defined strongly asymptotically periodic solution $u(x,t)$ is that for each $n\in\ZZ\setminus\{0\}$, $\Delta(k_n)\neq 0$ or $G_n^{(0)}=0$.
\end{prop}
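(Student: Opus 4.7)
The plan is to imitate the necessary-condition part of the proofs of Propositions~\ref{prop:NewDJT2} and~\ref{prop:LKdV.Coupled}, but carry out the Fredholm analysis of the $3\times 3$ system~\eqref{eqn:LKdV.Coupled.BdrySystem} at a point where its determinant vanishes. Assume for contradiction that $u$ is strongly asymptotically $T$-periodic. As in the derivation of equation~\eqref{eqn:LKdV.Coupled.BdrySystem}, $Q(k,t)$ is then strongly asymptotically $T$-periodic, each $q_n(k)$ must be entire, and so, at each simple zero $k=\alpha^j k_n$ of the denominator $\ri(n\omega-k^3)$, the numerator of $q_n(k)$ must vanish. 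Combining these three scalar conditions with the coupling identity $G^{(1)}_n=\beta H^{(1)}_n$ yields precisely system~\eqref{eqn:LKdV.Coupled.BdrySystem}, whose coefficient matrix has determinant $\Delta(k_n)$ with $\Delta$ given by~\eqref{eqn:LKdV.Coupled.Delta}--\eqref{eqn:LKdV.Coupled.Delta2} at $\beta=\pm 1$.

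First I would dispose of the easy case $\Delta(k_n)\neq 0$, where the matrix is of full rank and the system admits a (necessarily unique) solution, giving no contradiction. So suppose instead that $\Delta(k_n)=0$ for some $n\in\ZZ\setminus\{0\}$. Then the coefficient matrix of \eqref{eqn:LKdV.Coupled.BdrySystem} is rank-deficient, and by the Fredholm alternative the system is solvable if and only if the right-hand side $G_n^{(0)}k_n^2(1,\alpha^2,\alpha)^\transpose$ is orthogonal to every vector in the left null-space of the matrix. I would compute a generator $\mathbf{v}(k_n)=(v_0,v_1,v_2)^\transpose$ of this one-dimensional left null-space via the standard cofactor formula applied to the rows of the matrix in~\eqref{eqn:LKdV.Coupled.BdrySystem}, obtaining explicit expressions of the form $v_j = (-1)^j M_j(k_n)$ where the $M_j$ are $2\times 2$ minors involving $\alpha^j$, $\beta$, and exponentials $\re^{-\ri\alpha^j k_n}$. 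The compatibility condition then reads
\[
    G_n^{(0)} k_n^2 \bigl( v_0 + \alpha^2 v_1 + \alpha v_2 \bigr) = 0.
\]

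The crucial step is to show that the scalar factor $F(k_n):= v_0 + \alpha^2 v_1 + \alpha v_2$ does not vanish when $\Delta(k_n)=0$ and $\beta=\pm 1$, so that compatibility forces $G_n^{(0)}=0$. My approach would be to identify $F(k)$, up to an explicit nonvanishing prefactor, with a dual determinant built from the adjoint boundary form (which, since $\beta=\pm 1$ makes $L$ formally self-adjoint up to sign, reduces to $\Delta$ itself or a close relative). One then shows $F$ and $\Delta$ have no common zero away from $k=0$: either by noting that a common zero would make the matrix drop in rank by two rather than one, forcing a nontrivial two-parameter eigenspace for a regular boundary value problem, which is impossible; or, more concretely, by verifying directly from the cofactor expressions that at any $k\neq 0$ with $\Delta(k)=0$ one has $F(k)\neq 0$ using the elementary identities $1+\alpha+\alpha^2=0$ and $\alpha^3=1$ to collapse the expression to a nonzero exponential polynomial.

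This last identification is where I expect the real work to be. The main obstacle is ruling out accidental coincidences between the zeros of $\Delta$ and of $F$ on the real axis when $\beta=\pm 1$: unlike the $|\beta|>1$ case handled in Proposition~\ref{prop:LKdV.Coupled}, the $\beta=\pm 1$ operator does have real spectrum, so a purely asymptotic argument in the style of~\cite{Lan1931a} will not suffice and the argument must be structural (tying $F$ to the adjoint boundary form) rather than quantitative. Once $F(k_n)\neq 0$ is established, the compatibility relation reduces to $G_n^{(0)}k_n^2 F(k_n)=0$, and since $k_n\neq 0$ for $n\neq 0$, we conclude $G_n^{(0)}=0$, which is the stated necessary condition.
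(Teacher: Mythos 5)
Your setup is correct and matches the paper's derivation of system~\eqref{eqn:LKdV.Coupled.BdrySystem}, but from there you take a substantially more ambitious route than the paper does, and that route has a gap at exactly the point you flag as ``the real work.'' The paper's proof is far more modest: it substitutes $\beta=\pm1$ into~\eqref{eqn:LKdV.Coupled.Delta2}, observes from the resulting explicit formula that $\Delta$ has infinitely many \emph{real} zeros (asymptotically at $\pm(2m-1/3)\pi$ for $\beta=1$), and concludes that whenever $\Delta(k_n)=0$ the system is rank-deficient, so the Dirichlet-to-Neumann map cannot be \emph{uniquely} determined; that is the entire argument, with ``well-defined'' read as uniqueness. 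No Fredholm alternative, no left null vector and no compatibility condition are computed, and the ``or $G_n^{(0)}=0$'' disjunct is not really exercised by the paper's own proof.

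The gap in your version: the conclusion $G_n^{(0)}=0$ rests entirely on the claim that $F(k_n)=v_0+\alpha^2v_1+\alpha v_2\neq0$ at every nonzero zero of $\Delta$, and neither justification you offer establishes it. The first is wrong as stated: a simultaneous zero of $\Delta$ and $F$ does \emph{not} force the coefficient matrix to drop to rank one. $F(k)=0$ says only that the particular right-hand-side direction $(1,\alpha^2,\alpha)^\transpose$ is orthogonal to the (one-dimensional) left null space, i.e.\ lies in the range of the rank-two matrix; it says nothing further about the rank. In boundary-value-problem terms, $F(\lambda)=0$ at an eigenvalue $\lambda^3$ means the inhomogeneous Dirichlet datum at $x=0$ happens to satisfy the solvability condition against the adjoint eigenfunction $E_m^\star$, i.e.\ that a certain boundary trace of $E_m^\star$ vanishes --- conceivable in principle for a third-order operator and requiring a genuine argument to exclude. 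The second justification (collapsing the cofactor expressions using $1+\alpha+\alpha^2=0$) is the plausible path but is not carried out. Until $F(k_n)\neq0$ is actually proved, your argument establishes only that the mode-$n$ boundary coefficients are not uniquely determined when $\Delta(k_n)=0$, which is precisely the paper's weaker but complete argument, reached with much less machinery.
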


\begin{proof}
    Suppose $u$ is the unique solution of the given problem which is asymptotically periodic, namely the criteria of the proposition apply, and the Fourier coefficients of the unknown boundary values satisfy system~\eqref{eqn:LKdV.Coupled.BdrySystem}.
    If $\beta=1$,  equation~\eqref{eqn:LKdV.Coupled.Delta2} for the determinant of this system simplifies to
    \[
        \Delta(k) = -\ri2\sqrt{3} \left( \cos(k) - \sin\left(\frac{\pi}{6}+\frac{k}{2}\right)\re^{\sqrt{3}k/2} - \sin\left(\frac{\pi}{6}-\frac{k}{2}\right)\re^{-\sqrt{3}k/2} \right).
    \]
    This formula shows that  $\Delta$ has infinitely many real zeros $\lambda_m$, asymptotically (in $m$) distributed like $\lambda_m\sim\pm(2m-1/3)\pi$.
    If $\beta=-1$,
    equation~\eqref{eqn:LKdV.Coupled.Delta2} simplifies to
    \[
        \Delta(k) = -2\sqrt{3}\left( - \sin(k) + \sin\left(\frac{k}{2}+\frac{2\pi}{3}\right)\re^{\sqrt{3}k/2} + \sin\left(\frac{k}{2}-\frac{2\pi}{3}\right)\re^{-\sqrt{3}k/2} \right),
    \]
    and a similar argument applies.
    Therefore, if $\omega$ is such that there is a nonzero integer $n$ for which $\Delta(k_n)=0$,  the system~\eqref{eqn:LKdV.Coupled.BdrySystem} is not of full rank. It follows that if there is an $n$ such that $\Delta (k_n)=0$, the Dirichlet to Neumann map cannot be uniquely determined.
\end{proof}

\begin{rmk}
    We note that even if the necessary conditions for periodicity given by Proposition~\ref{prop:LKdV.CoupledBeta1} are satisfied, the solution will not usually be asymptotically periodic.
    Indeed, in this case we can construct, as in the previous cases, a particular initial condition $u_T(x)$ that guarantees that the solution $u_1$ of the associated problem is exactly periodic of period $T=2\pi/\omega$.
    On the other hand, the homogeneous version of the problem with $u(x,0)=u_0(x)-u_T(x)$ has a solution $u_2$ which can be expressed in terms of the complete set of eigenfunctions $\re^{-i\la_m^3t}$.
    Since the $\la_m$ are real, each of these eigenfunctions is individually periodic, but each has a different period, and the individual periods are incommensurate (although they satisfy the asymptotic condition $\la_{m}-\la_{m-2}\to_{m\to\infty}2\pi$, $\la_m-\la_{m-2}\neq2\pi$.)
    It follows that $u_2$ is not a periodic function, and therefore neither is the solution $u=u_1+u_2$.
\end{rmk}

\section*{Conclusions}
In this paper, we have considered boundary value problems posed on a finite interval, for three important linear PDE in one spatial variable.
For the second order cases, we have given the general theory for either Dirichlet or Neumann conditions.
For the third order case, we have discussed several general classes of boundary conditions.

The crucial ingredient of our methodology is the idea, borrowed from the unified transform approach, that the spectral representation of the given PDE should be considered in a complex rather than real spectral space. This yields naturally necessary conditions under which the solution may be periodic. The general complex integral representation of the solution of 2-point linear  boundary value problem  is also used to derive our results for third order problems.

In all the problems considered we have assumed that the given boundary conditions have the same asymptotic period.
Then it is natural to assume that the solution, if it is periodic at all, will share the same period as the boundary conditions.

Such assumption is no longer obvious when the boundary conditions have different periods, either commensurate or not.
However, we expect that it will be possible to  split the solution into  parts that have the periodicity of each boundary conditions, and use an approach analogous to the one presented in this paper.
The generalisation to such cases is left for future work.

\bibliographystyle{amsplain}
\bibliography{dbrefs}

\providecommand{\bysame}{\leavevmode\hbox to3em{\hrulefill}\thinspace}
\providecommand{\MR}{\relax\ifhmode\unskip\space\fi MR }
\providecommand{\MRhref}[2]{%
  \href{http://www.ams.org/mathscinet-getitem?mr=#1}{#2}
}
\providecommand{\href}[2]{#2}
\begin{thebibliography}{10}

\bibitem{DTV2014a}
B.~Deconinck, T.~Trogdon, and V.~Vasan, \emph{The method of {F}okas for solving
  linear partial differential equations}, SIAM Rev. \textbf{56} (2014), no.~1,
  159--186.

\bibitem{Duj2009a}
G.~M. Dujardin, \emph{Asymptotics of linear initial boundary value problems
  with periodic boundary data on the half-line and finite intervals}, Proc. R.
  Soc. Lond. Ser. A Math. Phys. Eng. Sci. \textbf{465} (2009), no.~2111,
  3341--3360.

\bibitem{Erd1956a}
A.~Erd\'elyi, \emph{Asymptotic expansions}, Dover, New York, 1956.

\bibitem{Fok1997a}
A.~S. Fokas, \emph{A unified transform method for solving linear and certain
  nonlinear {PDE}s}, Proc. R. Soc. Lond. Ser. A Math. Phys. Eng. Sci.
  \textbf{453} (1997), 1411--1443.

\bibitem{Fok2008a}
\bysame, \emph{A unified approach to boundary value problems}, CBMS-SIAM, 2008.

\bibitem{fvdw2021}
A.~S. Fokas and M.C.~Van der Weele, \emph{The unified transform for evolution
  equations on the half-line with time-periodic boundary conditions}, preprint
  (2021).

\bibitem{FL2012b}
A.~S. Fokas and J.~Lenells, \emph{The unified method: {I}{I} {N}{L}{S} on the
  half-line with t-periodic boundary conditions}, J. Phys. A \textbf{45}
  (2012), 195202.

\bibitem{FP2005a}
A.~S. Fokas and B.~Pelloni, \emph{A transform method for linear evolution
  {PDE}s on a finite interval}, IMA J. Appl. Math. \textbf{70} (2005),
  564--587.

\bibitem{Hop1919a}
J.~W. Hopkins, \emph{Some convergent developments associated with irregular
  boundary conditions}, Trans. Amer. Math. Soc. \textbf{20} (1919), 245--259.

\bibitem{Jac1915a}
D.~Jackson, \emph{Expansion problems with irregular boundary conditions}, Proc.
  Amer. Acad. Arts Sci. \textbf{51} (1915), no.~7, 383--417.

\bibitem{Lan1929a}
R.~E. Langer, \emph{The asymptotic location of the roots of a certain
  transcendental equation}, Trans. Amer. Math. Soc. \textbf{31} (1929), no.~4,
  837--844.

\bibitem{Lan1931a}
\bysame, \emph{The zeros of exponential sums and integrals}, Bull. Amer. Math.
  Soc. \textbf{37} (1931), 213--239.

\bibitem{Loc2000a}
J.~Locker, \emph{Spectral theory of non-self-adjoint two-point differential
  operators}, Mathematical Surveys and Monographs, vol.~73, American
  Mathematical Society, Providence, Rhode Island, 2000.

\bibitem{Pel2005a}
B.~Pelloni, \emph{The spectral representation of two-point boundary-value
  problems for third-order linear evolution partial differential equations},
  Proc. R. Soc. Lond. Ser. A Math. Phys. Eng. Sci. \textbf{461} (2005),
  2965--2984.

\bibitem{Smi2012a}
D.~A. Smith, \emph{Well-posed two-point initial-boundary value problems with
  arbitrary boundary conditions}, Math. Proc. Cambridge Philos. Soc.
  \textbf{152} (2012), 473--496.

\bibitem{Smi2012b}
\bysame, \emph{Well-posedness and conditioning of 3rd and higher order
  two-point initial-boundary value problems}, arXiv:1212.5466 [math.AP], 2012.

\end{thebibliography}

\end{document}